\documentclass [twoside,reqno, draft,  12pt] {amsart}



\usepackage{amsfonts}
\usepackage{amssymb}
\usepackage{a4}
\usepackage{color}

\newtheorem{thm}{Theorem}[section]
\newtheorem{cor}[thm]{Corollary}
\newtheorem{lem}[thm]{Lemma}
\newtheorem{prop}[thm]{Proposition}

\theoremstyle{definition}

\numberwithin{equation}{section}

\renewcommand{\Re}{\hbox{Re}\,}
\renewcommand{\Im}{\hbox{Im}\,}

\newcommand{\C}{\mathbb{C}}

\newcommand{\R}{\mathbb{R}}

\newcommand{\supp}{\operatorname{supp}}

\parindent0pt
\parskip6pt

\def\tilde{\widetilde}
\def \bfo {\begin {eqnarray*} }
\def \efo {\end {eqnarray*} }
\def \ba {\begin {eqnarray*} }
\def \ea {\end {eqnarray*} }
\def \beq {\begin {eqnarray}}
\def \eeq {\end {eqnarray}}
\def \supp {\hbox{supp }}

\def \p {\partial}

\def\tilde{\widetilde}
\def \bfo {\begin {eqnarray*} }
\def \efo {\end {eqnarray*} }
\def \ba {\begin {eqnarray*} }
\def \ea {\end {eqnarray*} }
\def \beq {\begin {eqnarray}}
\def \eeq {\end {eqnarray}}
\def \supp {\hbox{supp }}

\def \p {\partial}


\begin{document}

\title[Global identifiability in acoustic tomography of moving fluid]{Global identifiability of low regularity fluid parameters  in acoustic tomography of moving fluid}

\author[Liu]{Boya Liu}
\address{B. Liu, Department of Mathematics\\
University of California, Irvine\\ 
CA 92697-3875, USA }
\email{boyal2@uci.edu}

\maketitle
	
\begin{abstract} 
We are concerned with inverse boundary problems for first order perturbations of the Laplacian, which arise as model operators in the acoustic tomography of a moving fluid. We show that the knowledge of the Dirichlet--to--Neumann map on the boundary of a bounded domain in $\R^n$,  $n\ge 3$, determines the first order perturbation of low regularity up to a natural gauge transformation, which sometimes is trivial.  As an application, we recover the fluid parameters of low regularity from boundary measurements, sharpening 
the regularity assumptions in the recent results of \cite{Agal2015} and \cite{AARN2015}. In particular, we allow some fluid parameters to be discontinuous.

\end{abstract}
	
\section{Introduction and statement of results}
	
Let $\Omega\subset \R^n$, $n\ge 3$, be a bounded open set with smooth boundary and let us consider a moving fluid in $\Omega$ characterized by the sound speed $c$, density $\rho$, fluid velocity vector $v$, and absorption coefficient $\alpha = \alpha(\cdot, \omega)$ at a fixed frequency $\omega>0$. The time-harmonic acoustic pressure $p(x,t)$ is of the form  $p(x,t)= \Re (u(x)e^{-i\omega t})$, where $u$ satisfies the following second order elliptic equation, 
\begin{equation}
\label{eq_1_1}
L_{A(\omega),q(\omega)}u:= (-\Delta - 2iA(\omega)\cdot \nabla  + q(\omega))u=0\quad \textrm{in}\quad \Omega, 
\end{equation}
 with 
 \begin{equation}
 \label{eq_int_acoustic_tom_coeff}
 \begin{aligned}
&A(x;\omega) = \frac{\omega v(x)}{c^2(x)} + \frac{i}{2}\frac{\nabla \rho(x)}{\rho(x)}, \\ 
&q(x;\omega) = -\frac{\omega ^2}{c^2(x)} - 2i\omega \frac{\alpha (x;\omega)}{c(x)}, \quad \alpha(x,\omega)=\omega^{\zeta (x)}\alpha_0(x).
\end{aligned} 
\end{equation}
This model was considered for instance in \cite{Agal2015},  \cite{AN_scatt}, \cite{AARN2015},   \cite{BBS}, \cite{BSZR}, \cite{DRKW},  \cite{RBKS}, \cite{RumSh}, and \cite{MRHE}. In this paper we are interested in the inverse boundary problem of identifiability of the fluid parameters $c$, $\rho$, $v$, and  $\alpha$ from boundary measurements.  Such an inverse problem has applications in ocean tomography, where one wishes to determine the ocean temperature and heat transferring currents from acoustic measurements, and in medical diagnostics, where scalar inhomogeneities and the blood flow are to be determined.

To state the problem in precise terms, let us assume that $A(\omega)\in L^\infty (\Omega;\C^n)$ and $q(\omega)\in L^\infty(\Omega;\C)$, and let us first observe that the operator 
\[
L_{A(\omega),q(\omega)}:H^1_0(\Omega)\to H^{-1}(\Omega)
\]
is Fredholm of index zero. Here and in what follows the spaces $H^s(\Omega)$, $H^s_0(\Omega)$, $s\in \R$, are the standard $L^2$--based Sobolev spaces on $\Omega$, see \cite{Eskin_book}.  Indeed, the operator $L_{A(\omega),q(\omega)}$ differs from the invertible operator $-\Delta: H^1_0(\Omega)\to H^{-1}(\Omega)$ by a compact perturbation. We shall make the following standing assumption: 

\textbf{Assumption A.} The operator $L_{A(\omega),q(\omega)}:H^1_0(\Omega)\to H^{-1}(\Omega)$ is injective. 

Hence, it follows that for any $f\in H^{1/2}(\p \Omega)$, the boundary value problem 
\begin{equation}
\label{eq_int_BVP}
\begin{aligned}
&L_{A(\omega),q(\omega)}u=0\quad \text{in}\quad \Omega,\\
&u|_{\p\Omega}=f,
\end{aligned}
\end{equation}
has a unique solution $u\in H^1(\Omega)$, depending continuously on $f$.  Let $\nu$ be the unit outer normal vector to the boundary of $\Omega$. We shall define the trace of  the normal derivative $\partial_{\nu} u \in H^{-1/2} (\partial \Omega)$ as follows. Let $\varphi \in H^{1/2}(\partial \Omega)$ and let $v \in H^1(\Omega)$ be a continuous extension of $\varphi$. We define 
\begin{equation}
\label{tracedefinition}
\langle \partial _\nu u, \varphi \rangle_{H^{-1/2}(\partial \Omega) \times H^{1/2}(\partial \Omega)} = \int_\Omega [\nabla u \cdot \nabla v - 2i(A(\omega) \cdot \nabla u)v + q(\omega)uv]dx.
\end{equation}
Here $\langle \cdot, \cdot \rangle$ is the distributional duality. As $u$ solves \eqref{eq_int_BVP}, the definition of the trace $\partial  _\nu u$ on $\partial \Omega$ is independent of the choice of an extension $v$ of $\varphi$. 
The Dirichlet--to--Neumann map $\Lambda_{A(\omega),q(\omega)}$ is then defined as follows,
\[
\Lambda_{A(\omega),q(\omega)}: H^{1/2}(\p \Omega)\to H^{-1/2}(\p \Omega),\quad f\mapsto \p_\nu u|_{\p \Omega}. 
\]

The problem of acoustic tomography of moving fluids that we are interested in is as follows: given the Dirichlet--to--Neumann map $\Lambda_{A(\omega), q(\omega)}$ for some frequencies $\omega>0$, determine the fluid parameters $c$, $\rho$, $v$ and $\alpha$ in $\Omega$, see \eqref{eq_int_acoustic_tom_coeff}. This problem was studied in  \cite{Agal2015},  \cite{AARN2015}, under the assumption that $\Omega$ is simply connected and that the fluid parameters enjoy the following regularity properties, 
\begin{equation}
\label{eq_int_regul_coeff}
\begin{aligned}
&c\in W^{1,\infty}(\Omega;\R), \quad c>0, \quad \rho\in C(\overline{\Omega})\cap C^2(\Omega),\quad \rho>0, \\
&v\in W^{1,\infty}(\Omega;\R^n),\quad \alpha_0\in C(\overline{\Omega};\R), \quad \zeta\in C(\overline{\Omega};\R),\quad \zeta\ne 0.
\end{aligned}
\end{equation}
Specifically, it was shown in \cite{Agal2015} that the knowledge of the Dirichlet--to--Neumann map $\Lambda_{A(\omega), q(\omega)}$ at a fixed frequency $\omega>0$ determines the sound speed $c$ and the fluid velocity $v$ in $\Omega$ uniquely provided that $\alpha=0$ and $\rho$ is a constant.  In  \cite{AARN2015} it is proven that the knowledge of the Dirichlet--to--Neumann map $\Lambda_{A(\omega), q(\omega)}$ at two distinct frequencies determines $c$, $v$ in $\Omega$ uniquely, and $\rho$ up to a multiplication by a constant provided that $\alpha=0$, and  the knowledge of the Dirichlet--to--Neumann map $\Lambda_{A(\omega), q(\omega)}$ at three distinct frequencies determines $c$, $v$, $\alpha$ in $\Omega$ uniquely, and $\rho$ up to a multiplication by a constant.  The works  \cite{Agal2015}, \cite{AARN2015} discuss also the two-dimensional case, see also \cite{AN_scatt} for the study of  the corresponding inverse scattering problem in dimension $2$.  

The crucial idea of \cite{Agal2015}, \cite{AARN2015}  is that thanks to the regularity assumptions \eqref{eq_int_regul_coeff}, one has $A(\omega)\in W^{1,\infty}(\Omega;\C^n)$, $q(\omega)\in L^\infty(\Omega;\C)$, and one can then view the operator $L_{A(\omega),q(\omega)}$ as   the magnetic Schr\"odinger operator 
\begin{equation}
\label{magneticS}
(D_x+A(\omega))^2 + \tilde{q}(\omega), \quad D_x = \frac{1}{i}\partial_{x}, 
\end{equation}
where  
\begin{equation}
\label{qA1}
\quad \tilde{q}(\omega) = q(\omega) + i (\nabla \cdot A(\omega)) -A(\omega)^2 \in L^\infty(\Omega; \mathbb{C}). 
\end{equation}
These regularity assumptions allow one to use the global uniqueness result for the inverse boundary problem for the magnetic Schr\"odinger operator with electromagnetic potentials of class $L^\infty$, established in \cite{KKGU2014}. 

The purpose of this paper is to weaken the regularity assumptions \eqref{eq_int_regul_coeff} on the fluid parameters $c$, $v$, $\rho$, and $\alpha$ in the results of   \cite{Agal2015},  \cite{AARN2015}, and in particular, to allow some parameters to be discontinuous. In doing so, motivated by the recent work \cite{KKGU2017geo}, we shall impose regularity assumptions on the fluid parameters, weaker than those in \eqref{eq_int_regul_coeff}, implying that $A(\omega)\in (H^1 \cap L^\infty)(\Omega;\C^n)$ and $q(\omega)\in L^\infty(\Omega;\C)$.  In this case the reduction to the magnetic Schr\"odinger operator \eqref{magneticS} is not useful since $\tilde{q}(\omega)$ given by \eqref{qA1} is no longer in $L^\infty(\Omega)$ and we only have $\tilde q(\omega)\in L^2(\Omega)$. To the best of our knowledge, there are no results available for the inverse boundary problem for the magnetic Schr\"odinger operator with electromagnetic potentials of such low regularity. See \cite{Haberman_magnetic}, \cite{KKGU2014}  for the sharpest results in this direction. Therefore, we shall deal with our inverse problem directly, relying on the techniques developed in \cite{KKGU2017geo}.

Our results for the problem of acoustic tomography of moving fluids will be a consequence of a general result for the following inverse boundary problem, which we shall study first. To state the problem, let 
\[
L_{A,q}= -\Delta - 2iA\cdot \nabla  + q,
\]
where $A\in (H^1 \cap L^\infty)(\Omega;\C^n)$ and $q\in L^\infty(\Omega;\C)$, and let us suppose that assumption (A) holds for $L_{A,q}$. Notice that here $A$ and $q$ need not be of the form \eqref{eq_int_acoustic_tom_coeff}.  The inverse problem under consideration is to determine $A$ and $q$ in $\Omega$, given the Dirichlet--to--Neumann map $\Lambda_{A,q}$. Similarly to the inverse boundary problem for the magnetic Schr\"odinger operator,  there is an obstruction to uniqueness in this problem given by the following gauge transformation: if $\varphi \in C^\infty (\overline{\Omega})$ and $\varphi|_{\partial \Omega} = 0, \partial_\nu \varphi|_{\partial \Omega} = 0$, then $\Lambda_{A_1, q_1} = \Lambda_{A_2, q_2}$ where  
\begin{equation}
\label{gaugetransformation}
A_1 = A_2 + \nabla \varphi \quad \textrm{and} \quad q_1 = q_2 + 2A_2 \cdot \nabla \varphi +(\nabla \varphi)^2 -i\Delta \varphi,
\end{equation} 
see \cite{Sun_1993}. This follows from the fact that  
\begin{equation}
\label{eq_conj_lem}
e^{-i\varphi} L_{A_2,q_2} (e^{i\varphi}u)=L_{A_1, q_1}u
\end{equation}
and 
\begin{equation}
\label{eq_conj_lem_2}
u|_{\partial \Omega} = (e^{i\varphi }u)|_{\partial \Omega}, \quad (\partial_\nu u)|_{\partial \Omega} = \partial _\nu (e^{i\varphi}u)|_{\partial \Omega}
\end{equation}
Therefore, one can only hope to recover the coefficients $A$ and $q$ of the operator $L_{A,q}$ in $\Omega$ up to the gauge transformation \eqref{gaugetransformation} from the knowledge of the Dirichlet--to--Neumann map $\Lambda_{A,q}$. In this direction,  our main result is as follows.
\begin{thm}
\label{thm_main_complex}
Let $\Omega\subset \R^n$, $n\ge 3$, be a bounded open set with smooth boundary. Suppose that $A_1,A_2\in (H^1 \cap L^\infty)(\Omega;\C^n)$ and $q_1,q_2\in L^\infty(\Omega;\C)$ are such that assumption \emph{(A)} holds for $L_{A_1,q_1}$ and $L_{A_2,q_2}$. If $\Lambda_{A_1,q_1}=\Lambda_{A_2, q_2}$, then there exists $\varphi \in W^{1, \infty}(\Omega; \mathbb{C})$ such that $A_1 = A_2 + \nabla \varphi$ and $q_1 = q_2+2A_2 \cdot \nabla \varphi +(\nabla \varphi)^2 -i\Delta \varphi$ in $\Omega$. 
\end{thm}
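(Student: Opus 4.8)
The plan is to follow the now-standard CGO (complex geometric optics) strategy for inverse boundary problems, adapted to the low-regularity setting as in \cite{KKGU2017geo}, and to split the argument into the recovery of the magnetic part $A$ modulo gauge, followed by the recovery of $q$. First I would establish the standard integral identity: if $\Lambda_{A_1,q_1}=\Lambda_{A_2,q_2}$, then for all $u_1\in H^1(\Omega)$ solving $L_{A_1,q_1}u_1=0$ and all $u_2\in H^1(\Omega)$ solving $L_{A_2,\overline{q_2}}u_2=0$ (the formal adjoint equation, to make the boundary terms pair correctly), one has
\[
\int_\Omega \big[\,2i(A_1-A_2)\cdot(\nabla u_1)\,u_2 + (q_1-q_2)\,u_1 u_2\,\big]\,dx + \text{(terms with }\nabla u_2\text{)} = 0,
\]
after an integration by parts using $u_1|_{\partial\Omega}=u_2|_{\partial\Omega}$ and the equality of the normal traces supplied by $\Lambda_{A_1,q_1}=\Lambda_{A_2,q_2}$; one may also need a preliminary boundary-determination step showing $A_1=A_2$ and $q_1=q_2$ to infinite order, or at least enough to handle the boundary contributions, though for $A\in H^1\cap L^\infty$ one can often extend the coefficients by zero outside $\Omega$ and work on a larger domain to avoid delicate boundary regularity issues.

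Next I would construct CGO solutions. Write $A=A_1-A_2$, $q=q_1-q_2$, both compactly supported after extension. For $\zeta\in\C^n$ with $\zeta\cdot\zeta=0$ and $|\zeta|$ large, seek $u_j=e^{x\cdot\zeta}(a_j+r_j)$ where $a_j$ are suitable amplitudes and $r_j$ are remainders decaying in $|\zeta|$; here the low regularity of $A$ (only $H^1\cap L^\infty$, not $W^{1,\infty}$) forces the use of the Haberman--Tataru-type estimates in the Bourgain-type spaces $\dot X^{1/2}_\zeta$, exactly as developed in \cite{KKGU2017geo}, to control the remainder terms and the products $A\cdot\nabla r_j$ where one cannot afford to put all derivatives on $A$. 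The amplitudes $a_j$ should be chosen to solve a $\bar\partial$-type transport equation so as to cancel the leading term $\zeta\cdot A$; this is where one introduces the function whose gradient will be the gauge $\varphi$. Substituting the CGO solutions into the integral identity and sending $|\zeta|\to\infty$, the leading-order terms yield, after choosing $\zeta$ to range over a suitable family and taking limits, that the Fourier transform of (a gauge-adjusted) $A$ vanishes, hence $A_1-A_2=\nabla\varphi$ for some $\varphi$; tracking the regularity, since $A_j\in L^\infty$ one gets $\nabla\varphi\in L^\infty$, and $\varphi$ can be normalized to have zero boundary value, giving $\varphi\in W^{1,\infty}(\Omega;\C)$ with $\Delta\varphi=i\,\nabla\cdot(A_1-A_2)\in H^{-1}$ — one must check $\Delta\varphi$ makes sense in the formula for $q_1$, which it does as an $L^2$ (indeed $H^{-1}$-but-actually-better) distribution since $A_j\in H^1$ gives $\nabla\cdot A_j\in L^2$.

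Having obtained $\varphi$, I would replace $(A_2,q_2)$ by the gauge-equivalent pair $(A_2+\nabla\varphi,\,q_2+2A_2\cdot\nabla\varphi+(\nabla\varphi)^2-i\Delta\varphi)$, which by \eqref{gaugetransformation}--\eqref{eq_conj_lem_2} has the same Dirichlet--to--Neumann map, so that we may now assume $A_1=A_2$. Re-running the integral identity with $A_1=A_2$ kills the magnetic terms and leaves $\int_\Omega (q_1-q_2)u_1u_2\,dx=0$ for the relevant CGO pairs; the leading-order asymptotics of $u_1u_2=e^{2i x\cdot\xi}(1+o(1))$ then force $\widehat{q_1-q_2}=0$, hence $q_1=q_2$, which translated back is precisely $q_1=q_2+2A_2\cdot\nabla\varphi+(\nabla\varphi)^2-i\Delta\varphi$. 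I expect the main obstacle to be the CGO construction and the passage to the limit in the presence of the $A\cdot\nabla u$ term with $A$ only in $H^1\cap L^\infty$: the term $\int A\cdot(\nabla r_1)\,u_2$ cannot be handled by crude $L^\infty$ bounds on $A$ together with $L^2$ bounds on $\nabla r_1$ (which are not small), and one genuinely needs the refined bilinear/Carleman estimates of Haberman--Tataru and their extension in \cite{KKGU2017geo}, together with a careful mollification of $A$ at a $|\zeta|$-dependent scale to balance the error from smoothing against the error from the remainder. The boundary-term bookkeeping (justifying the integral identity with only $H^{-1/2}$ normal traces and $H^1\cap L^\infty$ coefficients) is a secondary technical point that I would handle by the extension-by-zero trick, enlarging $\Omega$ so all coefficients are supported in the interior.
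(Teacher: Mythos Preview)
Your overall architecture---integral identity, CGO solutions with amplitudes solving a $\bar\partial$-transport equation, recovery of $d(A_1-A_2)=0$ hence $A_1-A_2=\nabla\varphi$, gauge change, then recovery of $q$---matches the paper. But you misidentify the analytic engine and mislocate the main difficulty.

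The paper (and \cite{KKGU2017geo}, which you cite) does \emph{not} use Haberman--Tataru Bourgain-space estimates. It uses the standard $L^2$ Carleman/solvability estimate $\|r\|_{H^1_{\mathrm{scl}}}\le Ch^{-1}\|v\|_{H^{-1}_{\mathrm{scl}}}$ together with mollification $A\mapsto A_\tau$ at scale $\tau=h^{1/2}$; the $H^1$ regularity of $A$ gives $\|A-A_\tau\|_{L^2}=o(\tau)$ and $\|\partial^\alpha A_\tau\|_{L^2}=o(\tau^{-1})$ for $|\alpha|=2$, which balances the right-hand side to $o(h^{1/2})$ in $H^{-1}_{\mathrm{scl}}$, yielding $\|r\|_{H^1_{\mathrm{scl}}}=o(h^{1/2})$. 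With this in hand, the term you flag, $\int (A_1-A_2)\cdot(\nabla r_1)\,u_2$, \emph{is} handled by crude bounds: the first integral identity is multiplied by $h$, and $h\,\|A\|_{L^\infty}\|\nabla r_1\|_{L^2}\|a_2\|_{L^\infty}=O(1)\cdot o(h^{1/2})\to 0$. No $X^{1/2}_\zeta$ machinery is needed, and invoking it would be overkill (and not what \cite{KKGU2017geo} does).

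The genuine delicate point you do not anticipate occurs in the \emph{second} step, after the gauge change: the identity there is \emph{not} multiplied by $h$, and the term
\[
\int_B \nabla\varphi\cdot e^{ix\cdot\xi}\,a_1\,\nabla r_2\,dx
\]
is only bounded by $\|\nabla\varphi\|_{L^\infty}\|a_1\|_{L^\infty}\|\nabla r_2\|_{L^2}=o(h^{-1/2})$, which blows up. The paper fixes this by a \emph{second} mollification, now of $\nabla\varphi$ at scale $\tau=h$ (not $h^{1/2}$): since $\nabla\varphi=A_1-A_2\in H^1$, one has $\|\nabla\varphi-\nabla\varphi_\tau\|_{L^2}=o(\tau)$ and $\|\Delta\varphi_\tau\|_{L^2}=O(1)$, so after splitting into $(\nabla\varphi-\nabla\varphi_\tau)$ and integrating the $\nabla\varphi_\tau$ piece by parts, both contributions are $o(h^{1/2})$. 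This is the step where the $H^1$ assumption on $A$ is truly sharp for the Carleman approach; your proposal misses it.
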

	
Our first corollary is a generalization of a result of \cite{Agal2015} to the case of the vector field $A$ of low regularity.	 Notice that here the vector fields and scalar potentials are both real-valued.
\begin{cor}
\label{thm_main_real}
Let  $\Omega\subset \R^n$, $n\ge 3$, be a bounded open set with smooth boundary. Suppose that $A_1,A_2\in (H^1 \cap L^\infty)(\Omega;\R^n)$ and $q_1,q_2\in L^\infty(\Omega;\R)$ are such that assumption \emph{(A)} holds for $L_{A_1,q_1}$ and $L_{A_2,q_2}$. If $\Lambda_{A_1,q_1}=\Lambda_{A_2, q_2}$, then $A_1=A_2$ and $q_1=q_2$ in $\Omega$. 
\end{cor}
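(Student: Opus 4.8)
The plan is to deduce Corollary~\ref{thm_main_real} from Theorem~\ref{thm_main_complex} by showing that, when all coefficients are real-valued, the gauge function produced by the theorem must be trivial. First I would invoke Theorem~\ref{thm_main_complex} to obtain $\varphi\in W^{1,\infty}(\Omega;\C)$ with $A_1=A_2+\nabla\varphi$ and $q_1=q_2+2A_2\cdot\nabla\varphi+(\nabla\varphi)^2-i\Delta\varphi$ in $\Omega$. Since $A_1$ and $A_2$ are real-valued, $\nabla\varphi=A_1-A_2$ is real-valued, so $\nabla(\Im\varphi)=0$ and $\Im\varphi$ is locally constant on $\Omega$; replacing $\varphi$ by its real part --- which changes neither $\nabla\varphi$ nor $\Delta\varphi$ nor the two gauge identities --- we may assume $\varphi\in W^{1,\infty}(\Omega;\R)$.

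Next I would separate the second gauge relation into real and imaginary parts. Rewriting it as $i\Delta\varphi=q_2+2A_2\cdot\nabla\varphi+(\nabla\varphi)^2-q_1$, the right-hand side lies in $L^\infty(\Omega;\R)$, so $\Delta\varphi$ is purely imaginary; but $\Delta\varphi$ is real-valued because $\varphi$ is. Hence $\Delta\varphi=0$ in $\Omega$, so $\varphi$ is harmonic, hence smooth in $\Omega$, and moreover $q_1=q_2+2A_2\cdot\nabla\varphi+(\nabla\varphi)^2$. It therefore remains only to show $\nabla\varphi=0$: this yields $A_1=A_2$ at once, and then the last identity gives $q_1=q_2$.

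Ruling out a nonzero harmonic gauge is the crux of the matter, since a harmonic $W^{1,\infty}$ function on $\Omega$ need not be constant without boundary information. To handle it I would use that the gauge $\varphi$ can be taken to vanish on $\partial\Omega$. The standard route is the extension argument underlying the proof of Theorem~\ref{thm_main_complex}: one extends the coefficients across $\partial\Omega$ to a ball $B\supset\overline{\Omega}$ so that $A_1=A_2$ and $q_1=q_2$ on $B\setminus\overline{\Omega}$ and $\Lambda_{A_1,q_1}=\Lambda_{A_2,q_2}$ on $\partial B$, which furnishes the gauge $\varphi$ on all of $B$. Redoing the reduction above on $B$, the (real) $\varphi$ is harmonic on $B$, while $\nabla\varphi=A_1-A_2=0$ on $B\setminus\overline{\Omega}$, so $\varphi$ is locally constant there; normalizing $\varphi$ to vanish on the component of $B\setminus\overline{\Omega}$ meeting $\partial B$, we find that $\varphi$ vanishes on a nonempty open subset of $B$, whence $\varphi\equiv 0$ on $B$ by unique continuation for harmonic functions. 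Equivalently, once $\varphi|_{\partial\Omega}=0$ is known in the trace sense, uniqueness for the Dirichlet problem $\Delta\varphi=0$, $\varphi|_{\partial\Omega}=0$ gives $\varphi\equiv 0$ directly. In either formulation $\nabla\varphi=0$ in $\Omega$, hence $A_1=A_2$ and $q_1=q_2$. The technical point to watch is that the standing Assumption~(A) need not survive the extension to $B$; this is resolved in the usual way (by choosing $B$ suitably, or by a perturbation/approximation argument) and does not affect the conclusion.
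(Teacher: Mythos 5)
Your proposal is correct and follows essentially the same route as the paper: the paper's proof of Theorem~\ref{thm_main_complex} already produces a real-valued gauge $\varphi\in (W^{1,\infty}\cap H^2)(\R^n;\R)$ with $\supp(\varphi)\subset B$ satisfying \eqref{eq_func_varphi_referee_2}, whence the real/imaginary split gives $\Delta\varphi=0$ and the compact support (your vanishing near $\partial B$ plus unique continuation) forces $\varphi\equiv 0$, so $A_1=A_2$ and $q_1=q_2$. Your closing worry about Assumption (A) on $B$ is moot, since the proof of Theorem~\ref{thm_main_complex} works with the Cauchy data sets $C_{A_j,q_j}(B)$, which are defined without any injectivity assumption on the extended operators.
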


\textbf{Remark.} The assumption $A \in (H^1 \cap L^\infty)(\Omega;\C^n)$ in Theorem \ref{thm_main_complex} corresponds to the optimal space on the scale of spaces $H^s \cap L^\infty$, $s\ge 0$, for which the inverse boundary problem for the operator $L_{A,q}$ can be solved by means of the techniques of $L^2$ Carleman estimates.This could be seen in particular from the estimate \eqref{eq_500_3} which is of purely qualitative nature, see also the discussion in \cite{KKGU2017geo}.

Let us now return to the problem of acoustic tomography of moving fluid. Let $L_{A_j(\omega),q_j(\omega)}$ be of the form \eqref{eq_1_1}, \eqref{eq_int_acoustic_tom_coeff}, $j=1,2$.  We make the following regularity assumptions on the fluid parameters, which are strictly weaker than those in \eqref{eq_int_regul_coeff}, 
\begin{equation}
\label{eq_coeff_our}
\begin{aligned}
&c_j\in (H^1\cap L^\infty) (\Omega;\R), \quad c_j\ge c_0>0,  \quad \rho\in (H^2\cap W^{1,\infty})(\Omega;\R),\quad \rho_j\ge \rho_0>0, \\
&v_j\in (H^1\cap L^\infty) (\Omega;\R^n),\quad \alpha_{0,j}\in L^\infty(\Omega;\R), \quad \zeta_j\in L^\infty(\Omega;\R), \quad \zeta_j\ne 0.
\end{aligned}
\end{equation} 
In particular, we allow the parameters $c_j$, $v_j$, and  $\alpha_j$ to be discontinuous.   The regularity assumptions \eqref{eq_coeff_our} imply that $A_j(\omega)\in (H^1 \cap L^\infty)(\Omega;\C^n)$ and $q_j(\omega)\in L^\infty(\Omega; \C)$, $j=1,2$.

The following direct consequence of Corollary \ref{thm_main_real} gives an improvement of the corresponding result of \cite{Agal2015} in terms of the regularity of the parameters $c_j, v_j$, and can be stated as follows. 
\begin{cor}
\label{onefrequency}
Let $\Omega\subset \R^n$, $n\ge 3$, be a bounded open set with smooth boundary. Assume that $\alpha_1=\alpha_2=0$, $\rho_1, \rho_2$ are constant in $\Omega$, and that  $c_1$, $c_2$, $v_1$, $v_2$ satisfy \eqref{eq_coeff_our}.  Suppose that assumption \emph{(A)} holds for $L_{A_1(\omega),q_1(\omega)}$ and $L_{A_2(\omega),q_2(\omega)}$, and that $\Lambda_{A_1(\omega),q_1(\omega)}=\Lambda_{A_2(\omega),q_2(\omega)}$ for a fixed frequency $\omega>0$. Then $c_1=c_2$ and $v_1=v_2$ in $\Omega$.
\end{cor}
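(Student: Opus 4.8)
The plan is to deduce the statement directly from Corollary \ref{thm_main_real}, once I check that under the hypotheses the coefficients $A_j(\omega)$, $q_j(\omega)$ are real-valued and of the regularity required by that corollary. First I would note that since $\rho_j$ is constant, $\nabla\rho_j/\rho_j=0$, so by \eqref{eq_int_acoustic_tom_coeff} together with $\alpha_j=0$,
\[
A_j(\omega)=\frac{\omega v_j}{c_j^2}\in\R^n,\qquad q_j(\omega)=-\frac{\omega^2}{c_j^2}\in\R .
\]
Next I would verify the regularity. Since $c_j\ge c_0>0$ and $c_j\in H^1\cap L^\infty$, the function $c_j^{-2}$ lies in $H^1\cap L^\infty$: it is bounded by $c_0^{-2}$, and $\nabla(c_j^{-2})=-2c_j^{-3}\nabla c_j\in L^2$ because $c_j^{-3}\in L^\infty$ while $\nabla c_j\in L^2$. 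Since $H^1\cap L^\infty$ is an algebra — the product rule $\nabla(fg)=f\nabla g+g\nabla f$ keeps the right-hand side in $L^2$ when $f,g\in H^1\cap L^\infty$ — and $v_j\in(H^1\cap L^\infty)(\Omega;\R^n)$ by \eqref{eq_coeff_our}, it follows that $A_j(\omega)\in(H^1\cap L^\infty)(\Omega;\R^n)$ and $q_j(\omega)\in(H^1\cap L^\infty)(\Omega;\R)\subset L^\infty(\Omega;\R)$.

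With these facts in hand, and since assumption (A) holds for $L_{A_1(\omega),q_1(\omega)}$ and $L_{A_2(\omega),q_2(\omega)}$ by hypothesis, Corollary \ref{thm_main_real} applies and yields $A_1(\omega)=A_2(\omega)$ and $q_1(\omega)=q_2(\omega)$ in $\Omega$. From $q_1(\omega)=q_2(\omega)$ we get $\omega^2 c_1^{-2}=\omega^2 c_2^{-2}$ a.e.\ in $\Omega$, and since $\omega>0$ and $c_1,c_2>0$ this forces $c_1=c_2$. Substituting this into $A_1(\omega)=A_2(\omega)$, i.e.\ $\omega v_1 c_1^{-2}=\omega v_2 c_2^{-2}=\omega v_2 c_1^{-2}$, and cancelling the nonvanishing factor $\omega c_1^{-2}$ gives $v_1=v_2$ in $\Omega$, which completes the proof.

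I do not expect any serious obstacle here: the corollary is designed to be a black box feeding off Theorem \ref{thm_main_complex}. The only point that needs (minor) care is checking that passing from $c_j$ to $c_j^{-2}$ preserves membership in $H^1\cap L^\infty$, which is precisely where the uniform positivity $c_j\ge c_0>0$ in \eqref{eq_coeff_our} is used; all the substantive work is already contained in Theorem \ref{thm_main_complex} and Corollary \ref{thm_main_real}.
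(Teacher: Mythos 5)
Your proposal is correct and follows exactly the route the paper intends: the paper states Corollary \ref{onefrequency} as a direct consequence of Corollary \ref{thm_main_real}, having already noted that the assumptions \eqref{eq_coeff_our} make $A_j(\omega)$ real-valued in $H^1\cap L^\infty$ and $q_j(\omega)$ real-valued in $L^\infty$ when $\rho_j$ is constant and $\alpha_j=0$. Your verification that $c_j^{-2}\in H^1\cap L^\infty$ (using $c_j\ge c_0>0$) and the subsequent algebraic recovery of $c_j$ and then $v_j$ from $q_1(\omega)=q_2(\omega)$ and $A_1(\omega)=A_2(\omega)$ supplies precisely the details the paper leaves implicit.
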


The following results provide improvements of the corresponding results of \cite{AARN2015}  in terms of the regularity of the fluid parameters, and can be stated as follows. 
\begin{cor}
\label{twofrequency}
Let $\Omega\subset \R^n$, $n\ge 3$, be a bounded open set with smooth connected boundary. Assume that $\alpha_1=\alpha_2=0$, 
and that $c_1$, $c_2$, $v_1$, $v_2$, $\rho_1$, $\rho_2$ satisfy \eqref{eq_coeff_our}. 
Suppose that assumption \emph{(A)} holds for $L_{A_1(\omega),q_1(\omega)}$ and $L_{A_2(\omega),q_2(\omega)}$, and $\Lambda_{A_1(\omega),q_1(\omega)}=\Lambda_{A_2(\omega),q_2(\omega)}$ for two fixed distinct frequencies $\omega=\omega_1>0$ and $\omega=\omega_2>0$. Then $c_1=c_2$, $v_1=v_2$, and $\rho_1=C\rho_2$ in $\Omega$, where $C>0$ is a constant.
\end{cor}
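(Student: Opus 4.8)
The plan is to deduce Corollary~\ref{twofrequency} from Theorem~\ref{thm_main_complex} by exploiting the explicit dependence of the coefficients on the frequency. Since $\alpha_1=\alpha_2=0$, the potentials $q_j(\omega)=-\omega^2/c_j^2$ are real-valued, while $A_j(\omega)=\omega v_j/c_j^2+\frac{i}{2}\nabla\log\rho_j$ is in general genuinely complex-valued, so one must apply Theorem~\ref{thm_main_complex} rather than Corollary~\ref{thm_main_real}. Applied at $\omega=\omega_1$ and $\omega=\omega_2$, it produces $\varphi_1,\varphi_2\in W^{1,\infty}(\Omega;\C)$ with $A_1(\omega_k)=A_2(\omega_k)+\nabla\varphi_k$ and $q_1(\omega_k)=q_2(\omega_k)+2A_2(\omega_k)\cdot\nabla\varphi_k+(\nabla\varphi_k)^2-i\Delta\varphi_k$ for $k=1,2$.

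First I would normalize the gauge functions. Subtracting the two vector-field relations gives $\nabla(\varphi_1-\varphi_2)=(\omega_1-\omega_2)(v_1/c_1^2-v_2/c_2^2)$, a real field; since $\partial\Omega$ is connected, so is $\Omega$, and hence $\Psi:=(\varphi_1-\varphi_2)/(\omega_1-\omega_2)$ may be taken real-valued, with $\nabla\Psi=v_1/c_1^2-v_2/c_2^2$. Then $\nabla(\varphi_1-\omega_1\Psi)=\frac{i}{2}\nabla\log(\rho_1/\rho_2)$ is purely imaginary, so $\varphi_1-\omega_1\Psi=i\theta$ for a real $\theta$ with $\nabla\theta=\frac12\nabla\log(\rho_1/\rho_2)$; thus $\varphi_k=\omega_k\Psi+i\theta$ up to additive constants. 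A boundary determination argument — that equality of the Dirichlet--to--Neumann maps forces the tangential traces of $A_1(\omega)$ and $A_2(\omega)$ on $\partial\Omega$ to agree — together with the connectedness of $\partial\Omega$, shows that $\varphi_k$, and hence each of $\Psi$ and $\theta$, is constant on $\partial\Omega$; subtracting these constants we may assume $\Psi|_{\partial\Omega}=\theta|_{\partial\Omega}=0$.

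Next I would insert $\varphi_k=\omega_k\Psi+i\theta$ into the scalar-potential relation and split into real and imaginary parts. With $B:=v_2/c_2^2$ and $C_0:=\frac12\nabla\log\rho_2$, the imaginary part is $O(\omega_k)$; dividing by $\omega_k\neq 0$ yields
\[
\Delta\Psi=2(C_0+\nabla\theta)\cdot\nabla\Psi+2B\cdot\nabla\theta,
\]
while the real part is quadratic in $\omega_k$ and, since $\omega_1\neq\omega_2$, comparing its $\omega^2$- and $\omega^0$-coefficients gives
\[
\frac{1}{c_1^2}=\frac{1}{c_2^2}-2B\cdot\nabla\Psi-|\nabla\Psi|^2, \qquad \Delta\theta=2C_0\cdot\nabla\theta+|\nabla\theta|^2.
\]
Since $C_0+\nabla\theta=\frac12\nabla\log\rho_1$, the last identity is equivalent to $\nabla\cdot(\rho_1^{-1}\nabla e^{\theta})=0$; as $e^{\theta}\in H^1(\Omega)$ with $e^{\theta}|_{\partial\Omega}=1$ and the divergence-form operator $w\mapsto\nabla\cdot(\rho_1^{-1}\nabla w)$ is uniformly elliptic ($\rho_1$ being bounded above and below by positive constants), uniqueness for its Dirichlet problem forces $e^{\theta}\equiv 1$, hence $\theta\equiv 0$ and $\rho_1=C\rho_2$ for a constant $C>0$. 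Putting $\nabla\theta=0$ into the first displayed identity gives $\Delta\Psi=2C_0\cdot\nabla\Psi$, i.e. $\nabla\cdot(\rho_2^{-1}\nabla\Psi)=0$ with $\Psi|_{\partial\Omega}=0$, so $\Psi\equiv 0$ by the same argument; thus $v_1/c_1^2=v_2/c_2^2$. The first of the two displayed equations then reads $1/c_1^2=1/c_2^2$, so $c_1=c_2$, and consequently $v_1=v_2$.

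The step I expect to be the main obstacle is the boundary determination used above: one needs that the Dirichlet--to--Neumann map determines the tangential trace of the magnetic potential on $\partial\Omega$ for potentials of the low regularity $A\in(H^1\cap L^\infty)(\Omega;\C^n)$ considered here, and it is exactly at this point that the hypothesis that $\partial\Omega$ is connected enters — without it, the two divergence-form uniqueness arguments would pin down $\Psi$ and $\theta$ only up to functions that are locally constant on $\partial\Omega$. The remaining work — keeping track of the powers of $\omega$, the real/imaginary splitting, and recasting the resulting equations in divergence form — is routine.
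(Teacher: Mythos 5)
Your proposal is correct and follows essentially the same route as the paper: apply Theorem \ref{thm_main_complex} at the two frequencies, use boundary determination of the vector field (Proposition \ref{boundaryresult}) together with the connectedness of $\partial\Omega$ to fix boundary values, split the gauge relations into real and imaginary parts and into powers of $\omega$ (your three identities are algebraically equivalent to the paper's \eqref{eq 6.5}--\eqref{eq 6.7}), and finish with uniqueness for two scalar elliptic problems, first for the density ratio and then for the velocity difference. The only variation is technical: you keep the gauge explicitly as $\varphi_k=\omega_k\Psi+i\theta$ and settle both elliptic steps by an energy/divergence-form argument (e.g.\ $\nabla\cdot(\rho_1^{-1}\nabla e^{\theta})=0$ with $e^{\theta}=1$ on $\partial\Omega$), whereas the paper eliminates $\varphi$, writes the linear equation $\Delta g-X\cdot\nabla g=0$ for $g=\tfrac12\log(\rho_1/\rho_2)$, and invokes the maximum principle; both arguments are valid at the regularity \eqref{eq_coeff_our}, and your identification of the low-regularity boundary determination as the key external ingredient matches the paper's Appendix \ref{boundary}.
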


\begin{cor}
\label{threefrequency}
Let $\Omega\subset \R^n$, $n\ge 3$, be a bounded open set with smooth connected boundary. Assume that   $c_1$, $c_2$, $v_1$, $v_2$, $\rho_1$, $\rho_2$, $\alpha_1$, $ \alpha_2$  satisfy \eqref{eq_coeff_our}. 
  Suppose that assumption \emph{(A)} holds for $L_{A_1(\omega),q_1(\omega)}$ and $L_{A_2(\omega),q_2(\omega)}$, and $\Lambda_{A_1(\omega),q_1(\omega)}=\Lambda_{A_2(\omega),q_2(\omega)}$ for three fixed distinct frequencies $\omega=\omega_1>0$, $\omega=\omega_2>0$, $\omega=\omega_3>0$. Then $c_1=c_2$, $v_1=v_2$, $\rho_1=C\rho_2$, and $\alpha_1=\alpha_2$ in $\Omega$.
\end{cor}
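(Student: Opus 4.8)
The plan is to apply Theorem \ref{thm_main_complex} at each of the three frequencies and then extract the fluid parameters from the resulting gauge relations using the explicit $\omega$-dependence in \eqref{eq_int_acoustic_tom_coeff}. Since \eqref{eq_coeff_our} guarantees $A_j(\omega_k)\in(H^1\cap L^\infty)(\Omega;\C^n)$ and $q_j(\omega_k)\in L^\infty(\Omega;\C)$ for $j=1,2$ and $k=1,2,3$, Theorem \ref{thm_main_complex} gives $\varphi_k\in W^{1,\infty}(\Omega;\C)$ with $A_1(\omega_k)=A_2(\omega_k)+\nabla\varphi_k$ and $q_1(\omega_k)=q_2(\omega_k)+2A_2(\omega_k)\cdot\nabla\varphi_k+(\nabla\varphi_k)^2-i\Delta\varphi_k$ in $\Omega$; as all other terms are in $L^\infty$, the second relation forces $\Delta\varphi_k\in L^\infty(\Omega)$. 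Taking real and imaginary parts of the first relation and inserting \eqref{eq_int_acoustic_tom_coeff} gives $\nabla\,\Re\varphi_k=\omega_k(v_1/c_1^2-v_2/c_2^2)$ and $\nabla\,\Im\varphi_k=\tfrac12\nabla\log(\rho_1/\rho_2)$. Hence $v_1/c_1^2-v_2/c_2^2=\nabla\theta$ for some $\theta\in W^{1,\infty}(\Omega;\R)$, and after normalizing additive constants $\varphi_k=\omega_k\theta+i\psi$ with $\psi:=\tfrac12\log(\rho_1/\rho_2)\in(H^2\cap W^{1,\infty})(\Omega;\R)$, so also $\Delta\theta,\Delta\psi\in L^\infty(\Omega)$. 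Using that $\Lambda_{A,q}$ determines the trace of $A$ on $\partial\Omega$ (part of the boundary determination underlying Theorem \ref{thm_main_complex}; note the gauge \eqref{gaugetransformation} even vanishes to second order at $\partial\Omega$) together with the connectedness of $\partial\Omega$, I may further arrange $\theta|_{\partial\Omega}=0$ and $\psi|_{\partial\Omega}=\mathrm{const}$.

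Next I would substitute $\varphi_k=\omega_k\theta+i\psi$ and $A_2(\omega_k)=\omega_k v_2/c_2^2+\tfrac i2\nabla\log\rho_2$ into the second relation and separate real and imaginary parts. The real part is a quadratic polynomial identity in $\omega_k$; matching the coefficients of $\omega_k^2$ and $\omega_k^0$ at two distinct frequencies yields
\begin{equation*}
\frac1{c_1^2}+\Big|\frac{v_1}{c_1^2}\Big|^2=\frac1{c_2^2}+\Big|\frac{v_2}{c_2^2}\Big|^2 \qquad\text{and}\qquad \Delta\psi=\nabla\log\rho_2\cdot\nabla\psi+|\nabla\psi|^2,
\end{equation*}
the latter being equivalent to $\rho_1^{1/2}\Delta(\rho_1^{-1/2})=\rho_2^{1/2}\Delta(\rho_2^{-1/2})$. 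The imaginary part, after removing a common factor $\omega_k$, reads
\begin{equation*}
-2\,\omega_k^{\zeta_1}\,\frac{\alpha_{0,1}}{c_1}+2\,\omega_k^{\zeta_2}\,\frac{\alpha_{0,2}}{c_2}=Q,\qquad k=1,2,3,
\end{equation*}
with $Q\in L^\infty(\Omega;\R)$ independent of $k$. Fixing a point and viewing the left side as $F(\omega)$, $\omega>0$: if $\zeta_1\ne\zeta_2$ there, then $F$ is either identically zero (forcing $\alpha_{0,1}=\alpha_{0,2}=0$, so $\alpha_1=\alpha_2$ at that point) or has $F'$ vanishing at most once on $(0,\infty)$, so that $F$ assumes any value at most twice, contradicting $F(\omega_1)=F(\omega_2)=F(\omega_3)$; hence $\zeta_1=\zeta_2$ almost everywhere, and then $2(\alpha_{0,2}/c_2-\alpha_{0,1}/c_1)\,\omega_k^{\zeta_1}=Q$ at three distinct $\omega_k$ with $\zeta_1\ne0$ forces $\alpha_{0,1}/c_1=\alpha_{0,2}/c_2$ and $Q=0$ almost everywhere.

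With $Q\equiv0$ the system closes. Writing the $\psi$-equation as $\div(\rho_2^{-1}e^{-\psi}\nabla\psi)=0$ and pairing it with $\psi$ (the boundary term drops because $\psi|_{\partial\Omega}$ is constant) gives $\int_\Omega\rho_2^{-1}e^{-\psi}|\nabla\psi|^2\,dx=0$, hence $\nabla\psi\equiv0$, i.e.\ $\rho_1=C\rho_2$ for a positive constant $C$. Feeding $\nabla\psi=0$ back, the relation $Q=0$ simplifies to $\div(\rho_1^{-1}\nabla\theta)=0$, and pairing with $\theta$ (boundary term drops since $\theta|_{\partial\Omega}=0$) gives $\nabla\theta\equiv0$, i.e.\ $v_1/c_1^2=v_2/c_2^2$; the first displayed identity then gives $c_1=c_2$, whence $v_1=v_2$. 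Finally $c_1=c_2$ together with $\alpha_{0,1}/c_1=\alpha_{0,2}/c_2$ and $\zeta_1=\zeta_2$ gives $\alpha_1=\alpha_2$, which completes the proof.

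The main obstacle I anticipate is the low-regularity bookkeeping: one must justify that $\Delta\varphi_k$ is genuinely an $L^\infty$ function, that the integrations by parts above are valid when $\psi\in H^2\cap W^{1,\infty}$ and $\nabla\theta\in H^1\cap L^\infty$ (so the relevant divergence-form vector fields lie only in $H^1$), and---most delicately---that the boundary trace of $A\in H^1\cap L^\infty$ is indeed recovered from $\Lambda_{A,q}$, which is what permits the normalization of $\varphi_k$ on $\partial\Omega$. The algebraic core, namely the three-frequency monotonicity argument and the two Liouville-type identities, is robust; it is the interplay with the weak regularity and with the boundary information that requires care.
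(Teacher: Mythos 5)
Your proposal is correct and follows essentially the same route as the paper: apply Theorem \ref{thm_main_complex} at the given frequencies, use the boundary determination of the trace of $A$ (Proposition \ref{boundaryresult}) together with connectedness of $\partial\Omega$ to get constant boundary values for the gauge potentials, separate the real and imaginary parts of the resulting relation according to the $\omega$-dependence, run a pointwise-in-$x$ argument in the frequency variable using $\zeta_j\neq 0$, and finish with two scalar elliptic uniqueness steps yielding $\rho_1=C\rho_2$ and $v_1/c_1^2=v_2/c_2^2$, hence $c_1=c_2$, $v_1=v_2$, $\alpha_1=\alpha_2$. The only substantive deviations are technical choices: where the paper invokes the maximum principle for $\Delta g-X\cdot\nabla g=0$ with constant boundary data (citing Aubin), you exploit the divergence structure $\div(\rho_2^{-1}e^{-\psi}\nabla\psi)=0$ and $\div(\rho_1^{-1}\nabla\theta)=0$ and integrate by parts, which is a valid and arguably more elementary alternative at this regularity ($\nabla\psi,\nabla\theta\in H^1\cap L^\infty$, traces constant); and where the paper uses linear independence of the vectors $(1,\omega_k^{\zeta_1(x)},\omega_k^{\zeta_2(x)})$, $k=1,2,3$, you use an equivalent Rolle-type counting argument. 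One wording slip: from your dichotomy at a point with $\zeta_1(x)\neq\zeta_2(x)$ the correct conclusion is that $F\equiv 0$ there, so $\alpha_{0,1}(x)=\alpha_{0,2}(x)=0$ and $Q(x)=0$; it does not follow that $\zeta_1=\zeta_2$ almost everywhere. Read instead as passing to the complementary case $\zeta_1(x)=\zeta_2(x)$, your argument coincides with the paper's case analysis and the conclusion stands.
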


Let us emphasize that the issues of relaxing regularity assumptions on the coefficients in inverse boundary problems have recently been studied extensively. Let us illustrate this by discussing the following three fundamental inverse problems: the Calder\'on problem, 
and inverse boundary problems for the magnetic Schr\"odinger operator, as well as for the advection diffusion equation. The work  \cite{Syl_Uhl_1987} established the global identifiability for $C^2$ conductivities in the Calder\'on problem, see also \cite{Novikov_1988} for the global uniqueness in the closely related inverse scattering problem at a fixed energy. Subsequently, the regularity of the conductivity has been relaxed in \cite{Brown_1996}, \cite{Brown_Torres_2003}, \cite{Paiv_Pan_Uhl}  to conductivities having $3/2$ derivatives in a suitable sense. The paper \cite{Hab_Tataru} obtained the global uniqueness for $C^1$ conductivities and Lipschitz continuous conductivities close to the identity.  The latter smallness condition was removed in \cite{Caro_Rogers}, and the global uniqueness for conductivities in $W^{1,n}$, with $n=3,4$, was obtained in \cite{Haberman}.   

Turning the attention to the inverse problem for the magnetic Schr\"odinger operator, starting with the work \cite{Sun_1993}, where the global identifiability of the magnetic field and electric potential was established for magnetic potentials in $W^{2,\infty}$, satisfying a smallness condition, there has been a substantial amount of work reducing the regularity of the magnetic potential, see  \cite{Haberman_magnetic}, \cite{KKGU2014}, \cite{NakSunUlm_1995}, \cite{Panchenko_2002}, \cite{Salo_diss},   \cite{Tolmasky_1998}. The sharpest results in terms of the regularity of the magnetic potentials are given in \cite{KKGU2014} and \cite{Haberman_magnetic}, where the global identifiability is obtained for $L^\infty$ magnetic and electric potentials in any dimension $n\ge 3$ and for small magnetic potentials in $W^{s, 3}$, $s>0$,  and electric potentials in $W^{-1,3}$, in dimension $n=3$. 

We shall finally make some comments concerning the inverse boundary problem for the advection diffusion equation. Starting with
\cite{Cheng_Nakamura_Somersalo}, regularity issues for this problem were addressed in \cite{Knudsen_Salo}, \cite{KKGU2017geo}, \cite{Pohjola}, and \cite{Salo_diss}. The sharpest results in terms of the regularity of the advection term are due to \cite{Pohjola} and \cite{KKGU2017geo}, showing the global uniqueness in the inverse boundary problem for the advection diffusion equation, with a H\"older continuous advection term, with the H\"older exponent in the range $(2/3,1]$, and with the advection term of class $H^1\cap L^\infty$, respectively. 
 
The paper is organized as follows. Section \ref{sec_CGO} is devoted to the construction of complex geometric optics solutions for our equations, relying on the techniques developed in \cite{KSU_2007}, \cite{KKGU2014}, \cite{KKGU2017geo}, and \cite{Salo_Tzou_2009}. Theorem \ref{thm_main_complex} together with Corollary \ref{thm_main_real} are established in Section \ref{sec_proof_main}, and Section \ref{tomography} is concerned with the proofs of Corollary \ref{twofrequency} and Corollary \ref{threefrequency}, following the approach of \cite{AARN2015}  and noticing that it still works in our low regularity setting. Appendix \ref{boundary} reviews the boundary determination of the vector field $A$ of class $H^1\cap L^\infty$ from boundary measurements.

\section{Construction of complex geometric optics solutions} 
	
\label{sec_CGO}

We begin this section by introducing the following  operator, which comprises \eqref{eq_1_1} and its formal $L^2$ adjoint,
\begin{equation}
\label{pvwq}
P_{V, W, q} = -\Delta + V\cdot \nabla  +(\nabla \cdot W) +q,
\end{equation} where $V,W \in  L^{\infty} (\Omega; \mathbb{C}^n)$ and $q \in L^{\infty}(\Omega; \mathbb{C})$. Here the divergence $\nabla \cdot W$ is defined in the sense of distribution theory and we have $(\nabla \cdot W)u = \nabla \cdot (uW) - \nabla u \cdot W \in H^{-1}(\Omega)$ for $u \in H^1(\Omega)$. Thus, $P_{V,W,q}: H^1(\Omega) \rightarrow H^{-1} (\Omega)$. 
	
Let $0<h\le 1$ be a semiclassical parameter. Our starting point is the following solvability result for the operator $h^2P_{V,W,q}$, conjugated by an exponential weight. 
\begin{prop}
\label{solvability result}
Let $\varphi(x) = \alpha \cdot x$, where $\alpha \in \mathbb{R}^n$ is such that $|\alpha| = 1$, and assume that $V,W\in L^\infty (\Omega;\C^n)$ and $q\in L^\infty(\Omega;\C)$. If $h>0$ is small enough, then for any $v \in H^{-1}(\Omega)$, there is a solution $u \in H^1(\Omega)$ of the equation 
\begin{equation}
e^{\frac{\varphi}{h}}(h^2P_{V,W,q})e^{-\frac{\varphi}{h}}u = v  \quad \textrm{in} \quad \Omega
\end{equation}
satisfying $\|u\|_{H^1_{\textrm{scl}}(\Omega)} \le \frac{C}{h}\|v\|_{H^{-1}_{\textrm{scl}}(\Omega)}$. 
\end{prop}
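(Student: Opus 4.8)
The plan is to prove the proposition in three stages: a Carleman estimate for the conjugated Laplacian, the corresponding solvability of its principal part by duality, and the incorporation of the lower-order terms $V,W,q$ by a perturbation argument. With $\varphi(x)=\alpha\cdot x$ a direct computation (using $\Delta\varphi=0$, $|\nabla\varphi|=1$) gives $e^{\varphi/h}(-h^2\Delta)e^{-\varphi/h}=(hD+i\alpha)^2=:L_\varphi$, a \emph{constant-coefficient} operator. I would begin from the classical semiclassical $L^2$ Carleman estimate for limiting Carleman weights, see \cite{KSU_2007,Salo_Tzou_2009}: there are $C,h_0>0$ so that $h\|u\|_{H^1_{\textrm{scl}}(\Omega)}\le C\|L_\varphi u\|_{L^2(\Omega)}$ for $u\in C_c^\infty(\Omega)$ and $0<h\le h_0$. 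Since $L_\varphi$ commutes with the Fourier multiplier $\langle hD\rangle^{-1}$, applying this estimate to a cutoff of $\langle hD\rangle^{-1}u$ (the cutoff error being $O(h^\infty)$ because $\langle hD\rangle^{-1}$ has an almost-diagonal kernel) and then recovering the gradient term on the left by testing $L_\varphi u$ against $u$, one upgrades it to
\[
h\|u\|_{H^1_{\textrm{scl}}(\Omega)}\le C\|L_\varphi u\|_{H^{-1}_{\textrm{scl}}(\Omega)},\qquad u\in C_c^\infty(\Omega),\ 0<h\le h_0 ,
\]
and the same estimate holds for the formal adjoint $L_\varphi^\ast=(hD-i\alpha)^2$, which is $L_\varphi$ with $\alpha$ replaced by $-\alpha$.

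Applying a Hahn--Banach argument to the adjoint estimate then produces, for each $v\in H^{-1}_{\textrm{scl}}(\Omega)$, a solution $u\in H^1(\Omega)$ of $L_\varphi u=v$ in $\Omega$ with $\|u\|_{H^1_{\textrm{scl}}(\Omega)}\le\tfrac{C}{h}\|v\|_{H^{-1}_{\textrm{scl}}(\Omega)}$; write $G_\varphi$ for a bounded solution operator so obtained.

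For the lower-order terms, conjugation of the full operator, together with the observation that $e^{\varphi/h}\,h^2(\nabla\cdot W)\,e^{-\varphi/h}=h^2(\nabla\cdot W)$ (the $h^{-1}$ contributions cancel), gives $e^{\varphi/h}(h^2P_{V,W,q})e^{-\varphi/h}=L_\varphi+B_h$ with
\[
B_h=h\,V\cdot(h\nabla)-h\,(V\cdot\alpha)+h^2(\nabla\cdot W)-h^2\,W\cdot\nabla+h^2q ,
\]
and since $V,W,q\in L^\infty(\Omega)$ one checks $\|B_h\|_{H^1_{\textrm{scl}}(\Omega)\to H^{-1}_{\textrm{scl}}(\Omega)}=O(h)$. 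The crux is that this is \emph{not} enough for a naive Neumann series: $G_\varphi$ is of size $O(1/h)$, $B_h$ only of size $O(h)$, so $G_\varphi B_h$ is merely $O(1)$, with a constant involving $\|V\|_{L^\infty}$ and $\|W\|_{L^\infty}$ that cannot be made small. Following the approach of \cite{KKGU2017geo}, the way around this is to carry out the inversion not in the semiclassical Sobolev scale but in Bourgain-type spaces $\dot X^{\pm1/2}_h$ adapted to the symbol $(h\xi+i\alpha)^2$ of $L_\varphi$ (in the spirit of \cite{Haberman,Haberman_magnetic}): in these spaces $L_\varphi$ is invertible with norm bounded \emph{uniformly} in $h$, whereas the conjugated lower-order terms satisfy $\|B_h\|_{\dot X^{1/2}_h\to\dot X^{-1/2}_h}\to 0$ as $h\to0$ --- the purely qualitative smallness estimate referred to in the Remark above, which holds already for $L^\infty$ coefficients. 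Hence for $h$ small a Neumann series converges and gives a right inverse of $L_\varphi+B_h$ on $\dot X^{-1/2}_h$ with uniformly bounded norm; composing with the continuous inclusions relating $\dot X^{\pm1/2}_h$ to $H^{\pm1}_{\textrm{scl}}(\Omega)$ on functions supported in $\Omega$, which carry the $O(1/h)$ loss, one obtains the asserted solution $u\in H^1(\Omega)$ with $\|u\|_{H^1_{\textrm{scl}}(\Omega)}\le\tfrac{C}{h}\|v\|_{H^{-1}_{\textrm{scl}}(\Omega)}$.

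The step I expect to be the main obstacle is precisely this last one: reconciling the $O(1/h)$ loss in inverting the principal part with the only-$O(h)$ size of the conjugated first-order and divergence terms forces one to work in function spaces finely adapted to the characteristic variety $\{\,|h\xi|=1,\ \alpha\cdot\xi=0\,\}$ of $L_\varphi$ --- where the perturbation genuinely becomes of lower order --- and to establish the (rate-free) decay of the conjugated perturbation in that scale. Everything else is either classical or routine bookkeeping.
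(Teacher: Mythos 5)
You have the right architecture for the first two stages (the shifted Carleman estimate for the constant–coefficient operator $L_\varphi=e^{\varphi/h}(-h^2\Delta)e^{-\varphi/h}$, and solvability of the principal part by Hahn--Banach duality), and you correctly diagnose the real difficulty: the conjugated lower-order terms are only $O(h)$ as maps $H^1_{\mathrm{scl}}\to H^{-1}_{\mathrm{scl}}$ while the free solution operator costs $O(1/h)$, so no smallness is available on the Sobolev scale. But the remedy you propose is not the one the paper (via \cite[Proposition 2.3]{KKGU2017geo}) relies on, and its key step is unsubstantiated. Your Neumann series hinges on the claim that $\|B_h\|_{\dot X^{1/2}_h\to\dot X^{-1/2}_h}\to 0$ as $h\to 0$ ``already for $L^\infty$ coefficients.'' No such estimate is available: for a first-order term $hV\cdot h\nabla$ with $V$ merely bounded, the $\dot X^{1/2}\to\dot X^{-1/2}$ norm is $O(1)$ at best, and even for zeroth-order $L^\infty$ potentials the decay of this norm is obtained in the Bourgain-space literature only after averaging over the complex frequency (see \cite{Hab_Tataru}), which produces CGO solutions along suitable sequences of parameters, not the uniform solvability for every small $h$ and every $v\in H^{-1}(\Omega)$ asserted here; in \cite{Haberman_magnetic} the first-order term is controlled only under additional regularity/smallness hypotheses ($W^{s,3}$, $s>0$, $n=3$). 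Note also that the Remark you invoke refers to the estimate \eqref{eq_500_3} on the product of remainders in the proof of Theorem \ref{thm_main_complex}, not to any smallness of the conjugated perturbation, and that \cite{KKGU2017geo} does not use Bourgain-type spaces at all.

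The mechanism actually behind the proposition (the paper quotes \cite[Proposition 2.3]{KKGU2017geo}, whose proof rests on \cite{KSU_2007}, \cite{KKGU2014}, \cite{Salo_Tzou_2009}) is to put $V$, $W$, $q$ \emph{inside} the Carleman estimate rather than to perturb the inverse. One works with the convexified weight $\varphi_\varepsilon=\varphi+\frac{h}{2\varepsilon}\varphi^2$, for which the semiclassical pseudodifferential calculus yields an estimate with a gain of two derivatives and the improved constant $h/\sqrt{\varepsilon}$,
\begin{equation*}
\frac{h}{\sqrt{\varepsilon}}\,\|u\|_{H^{1}_{\mathrm{scl}}(\Omega)}\le C\,\big\|e^{\varphi_\varepsilon/h}(-h^2\Delta)e^{-\varphi_\varepsilon/h}u\big\|_{H^{-1}_{\mathrm{scl}}(\Omega)},\qquad u\in C_0^\infty(\Omega),
\end{equation*}
with $0<h\ll\varepsilon\ll 1$. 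The conjugated contributions of $hV\cdot h\nabla$, $h^2(\nabla\cdot W)$ (written as $h\,[h\nabla\cdot(uW)]-h\,W\cdot h\nabla u$) and $h^2q$ are bounded in $H^{-1}_{\mathrm{scl}}$ by $Ch(\|V\|_{L^\infty}+\|W\|_{L^\infty}+\|q\|_{L^\infty})\|u\|_{H^1_{\mathrm{scl}}}$, so they are absorbed by choosing $\varepsilon$ small but fixed, depending only on these $L^\infty$ norms; reverting from $\varphi_\varepsilon$ to $\varphi$ only changes constants. Applying the resulting estimate to the formal adjoint and running your own duality step then gives the solution with the $C/h$ bound. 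In short, the gap is concentrated in your third stage: the Bourgain-space Neumann series should be replaced by the convexified-weight absorption of the $L^\infty$ lower-order terms.
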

Here 
\[
\|u\|_{H^1_{\textrm{scl}}(\Omega)}^2=\|u\|_{L^2(\Omega)}^2+\|h\nabla u\|_{L^2(\Omega)}^2,\quad 
\|u\|_{H^{-1}_{\textrm{scl}}(\Omega)}=\sup_{0\ne\psi\in C^\infty_0(\Omega)} \frac{|\langle u,\psi \rangle_{\Omega}|}{\|\psi\|_{H^1_{\textrm{scl}}(\Omega)}},
\]
where $\langle \cdot,\cdot \rangle_{\Omega}$ is the distributional duality on $\Omega$.  Proposition \ref{solvability result} is established in the work \cite[Proposition 2.3]{KKGU2017geo} in the setting of admissible compact Riemannian manifolds with boundary, relying crucially on the works \cite{KSU_2007}, \cite{KKGU2014},  \cite{KKGU2017}, and  \cite{Salo_Tzou_2009}.  The arguments in the Euclidean case are exactly the same.

The goal of this section is to review the construction of complex geometric optics (CGO) solutions  for the equation  $P_{V,W,q}u=0$ in $\Omega$ with $V, W\in (H^1\cap L^\infty)(\Omega; \C^n)$ and $q\in L^\infty(\Omega;\C)$. In doing so, we shall follow the arguments of \cite{KKGU2014} and \cite{KKGU2017geo}. 
In general, CGO solutions are of the form 
\[
u(x,\zeta;h) = e^{\frac{x\cdot \zeta}{h}} (a(x,\zeta;h)+r(x,\zeta;h)),
\]
 where $\zeta \in \mathbb{C}^n, \zeta \cdot \zeta = 0, |\zeta|\sim 1,a$ is a smooth amplitude, and $r$ is a remainder term.  
 	
First, we shall extend $V$ and $W$ to compactly supported functions in $(H^1 \cap L^\infty)(\mathbb{R}^n; \mathbb{C}^n)$ and denote the extensions by the same letters. We refer to \cite[Section 2.2]{KKGU2016} for a construction of such extensions. In order to obtain nice remainder estimates for our CGO solutions, we shall work with  regularizations of $V$ and $W$. To this end, let $\psi _\tau (x) = \tau ^{-n}\psi\bigg(\dfrac{x}{\tau}\bigg), \tau>0$, be the usual mollifier with $\psi \in C_0^\infty(\mathbb{R}^n), 0\le \psi \le 1$, and $\int_{\mathbb{R}^n} \psi dx =1$. Assume also that $\psi$  is radial.  Define $V_\tau = V\ast \psi_\tau \in C_0^\infty (\mathbb{R}^n; \mathbb{C}^n)$.  We have the following estimates, which allow us to approximate $V$ by its regularization $V_\tau$, see \cite[Appendix B]{KKGU2017geo} for the proof.
\begin{prop}
\label{approximationestimates}
We have
\begin{align*}
&\|V-V_\tau\|_{L^2(\mathbb{R}^n)} = o(\tau),  \quad \|V_\tau\|_{L^2(\mathbb{R}^n)} = \mathcal{O}(1), \\ 
&\|\nabla  V_\tau\|_{L^2(\mathbb{R}^n)} = \mathcal{O}(1), \quad \|\p^{\alpha} V_\tau\|_{L^2(\mathbb{R}^n)} = o(\tau ^{-1}), \quad |\alpha|=2, \\  
&\|V_\tau\|_{L^\infty(\mathbb{R}^n)} = \mathcal{O}(1), \quad \|\nabla V_\tau\|_{L^\infty(\mathbb{R}^n)} = \mathcal{O}(\tau ^{-1}), \quad  \tau \rightarrow 0.
\end{align*}
\end{prop}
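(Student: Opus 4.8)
The plan is to prove each estimate by standard mollifier arguments, using only the hypotheses $V \in (H^1 \cap L^\infty)(\R^n;\C^n)$ and the properties of $\psi_\tau$ (nonnegative, radial, mass one, compactly supported). First I would recall the two basic building blocks: Young's inequality $\|V_\tau\|_{L^p} = \|V \ast \psi_\tau\|_{L^p} \le \|\psi_\tau\|_{L^1}\|V\|_{L^p} = \|V\|_{L^p}$ for $p \in \{2,\infty\}$, which immediately gives $\|V_\tau\|_{L^2} = \mathcal{O}(1)$ and $\|V_\tau\|_{L^\infty} = \mathcal{O}(1)$; and the commutation of convolution with derivatives, $\p^\alpha V_\tau = (\p^\alpha V) \ast \psi_\tau$ when $\p^\alpha V$ exists as an $L^2$ function, which for $|\alpha| = 1$ yields $\|\nabla V_\tau\|_{L^2} = \|(\nabla V)\ast\psi_\tau\|_{L^2} \le \|\nabla V\|_{L^2} = \mathcal{O}(1)$.

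For the $L^2$ approximation $\|V - V_\tau\|_{L^2} = o(\tau)$, I would write $V(x) - V_\tau(x) = \int \psi_\tau(y)\bigl(V(x) - V(x-y)\bigr)\,dy$ and use the standard estimate for $H^1$ functions: $\|V(\cdot) - V(\cdot - y)\|_{L^2} \le |y|\,\|\nabla V\|_{L^2}$, so that $\|V - V_\tau\|_{L^2} \le \tau\,\|\nabla V\|_{L^2}\int \psi(z)|z|\,dz = \mathcal{O}(\tau)$. To upgrade $\mathcal{O}(\tau)$ to $o(\tau)$, I would invoke the standard refinement: since $C_0^\infty$ is dense in $H^1$, given $\varepsilon>0$ pick a smooth $W_\varepsilon$ with $\|\nabla(V - W_\varepsilon)\|_{L^2} < \varepsilon$; then $\|(V - W_\varepsilon) - (V - W_\varepsilon)_\tau\|_{L^2} \le \tau\varepsilon\int\psi(z)|z|\,dz$, while $\|W_\varepsilon - (W_\varepsilon)_\tau\|_{L^2} = o(\tau)$ by a Taylor expansion using that $\psi$ is radial (the first-order term integrates to zero, leaving an $\mathcal{O}(\tau^2\|\nabla^2 W_\varepsilon\|_{L^2})$ remainder). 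Combining and letting $\varepsilon \to 0$ gives $o(\tau)$.

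For the remaining two estimates involving regularization at scale $\tau$, I would use $\|\p^\alpha \psi_\tau\|_{L^1} = \tau^{-|\alpha|}\|\p^\alpha\psi\|_{L^1}$. For $|\alpha| = 2$, writing $\p^\alpha V_\tau = (\nabla V) \ast \p^\beta\psi_\tau$ with $|\beta| = 1$ gives the crude bound $\mathcal{O}(\tau^{-1})$; to get $o(\tau^{-1})$ I would again split $\nabla V = \nabla W_\varepsilon + (\nabla V - \nabla W_\varepsilon)$, estimate the smooth part by putting both derivatives on $W_\varepsilon$ (bounded, times $\|\p^\beta\psi_\tau\|_{L^1}\cdot 0$-order, actually $\mathcal{O}(1)$), and the rough part by $\|\nabla V - \nabla W_\varepsilon\|_{L^2}\,\tau^{-1}\|\nabla\psi\|_{L^1} < \varepsilon\tau^{-1}\|\nabla\psi\|_{L^1}$, then let $\varepsilon\to0$. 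Finally, $\|\nabla V_\tau\|_{L^\infty} = \|V \ast \nabla\psi_\tau\|_{L^\infty} \le \|V\|_{L^\infty}\|\nabla\psi_\tau\|_{L^1} = \mathcal{O}(\tau^{-1})$ by Young's inequality.

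I expect the only genuine subtlety to be the passage from $\mathcal{O}$ to $o$ in the first and fourth estimates, where one must exploit both the radiality of $\psi$ and an approximation-by-smooth-functions argument; the remaining bounds are immediate consequences of Young's inequality and the scaling of mollifier derivatives. Since this proposition is quoted from \cite[Appendix B]{KKGU2017geo}, I would ultimately just cite that reference for the detailed verification, and the above serves as an outline of the standard argument.
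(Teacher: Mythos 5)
Your proposal is correct, and all six bounds check out: Young's inequality handles the $\mathcal{O}(1)$ and $\mathcal{O}(\tau^{-1})$ estimates, the standard translation bound $\|V(\cdot)-V(\cdot-y)\|_{L^2}\le |y|\,\|\nabla V\|_{L^2}$ plus Minkowski's integral inequality gives $\mathcal{O}(\tau)$, and your density-plus-Taylor refinement (radiality killing the first moment of $\psi_\tau$, leaving an $\mathcal{O}(\tau^2\|\nabla^2 W_\varepsilon\|_{L^2})$ remainder for the smooth part) correctly upgrades this to $o(\tau)$; likewise splitting $V=W_\varepsilon+(V-W_\varepsilon)$ and distributing one derivative onto $\psi_\tau$ gives $o(\tau^{-1})$ for $|\alpha|=2$. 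Note, however, that the paper contains no proof of this proposition at all: it is quoted verbatim from \cite[Appendix B]{KKGU2017geo}, so there is nothing in-paper to compare against, and your outline is a correct self-contained substitute for the citation. Two small remarks. First, in your closing sentence you say radiality is needed for both little-$o$ estimates; in fact your own argument for $\|\partial^\alpha V_\tau\|_{L^2}=o(\tau^{-1})$ uses only density of $C_0^\infty$ in $H^1$ and the scaling $\|\partial^\beta\psi_\tau\|_{L^1}=\tau^{-1}\|\partial^\beta\psi\|_{L^1}$ --- no moment condition enters; radiality (or merely a vanishing first moment of $\psi$) is essential only for $\|V-V_\tau\|_{L^2}=o(\tau)$. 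Second, an equivalent route to both little-$o$ bounds that avoids the approximation-by-smooth-functions step is Plancherel plus dominated convergence: $\tau^{-2}\|V-V_\tau\|_{L^2}^2=\int|\hat V(\xi)|^2\,\tau^{-2}|1-\hat\psi(\tau\xi)|^2\,d\xi$ with $|1-\hat\psi(\eta)|\le C|\eta|\min(1,|\eta|)$ (the quadratic behaviour at $0$ coming from $\nabla\hat\psi(0)=0$, i.e.\ radiality), and similarly $\tau^2\|\partial^\alpha V_\tau\|_{L^2}^2\le\int|\xi|^2|\hat V(\xi)|^2\bigl(\tau|\xi|\,|\hat\psi(\tau\xi)|\bigr)^2d\xi\to 0$ since $V\in H^1$; either way the conclusion is the same.
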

	
We want to find an amplitude $a$ and a remainder $r$ such that 
\begin{equation}
\label{3.1}
P_{V,W,q}e^{\frac{x\cdot \zeta}{h}} (a+r) = 0.
\end{equation} 
Here  we shall allow $\zeta$ to  depend slightly on $h$, i.e. $\zeta = \zeta_0 + \zeta_1$, where $\zeta_0$ is independent of $h$ and $\zeta_1 = \mathcal{O}(h)$ as $h \rightarrow 0$. We also assume $|\Re\zeta_0|$ = $|\Im\zeta_0| = 1$.
	
Writing 
\[
e^{-{\frac{x\cdot \zeta}{h}}}(h^2P_{V,W,q})e^{\frac{x\cdot \zeta}{h}}r = -e^{-{\frac{x\cdot \zeta}{h}}}(h^2P_{V,W,q})e^{\frac{x\cdot \zeta}{h}}a,
\]
and computing the conjugated operator, we get 
\begin{align*}
e^{-{\frac{x\cdot \zeta}{h}}}(h^2P_{V,W,q})e^{\frac{x\cdot \zeta}{h}} r
= &-\big(-h^2\Delta - 2\zeta_0 \cdot h\nabla -2\zeta_1 \cdot h\nabla + hV \cdot h\nabla \\ & +h\zeta_0 \cdot (V-V_\tau) + h\zeta_0 \cdot V_{\tau}	+ hV \cdot \zeta_1+ h^2(\nabla \cdot W)+h^2q\big) a.
\end{align*}

Following the WKB method, we obtain the following equation for the remainder
\begin{equation}
\label{CGO2}
\begin{split}
e^{-{\frac{x\cdot \zeta}{h}}}(h^2P_{V,W,q})e^{\frac{x\cdot \zeta}{h}}r = &-[-h^2\Delta a + hV \cdot h\nabla a + h^2(\nabla \cdot W)a + h^2qa]+ 2\zeta_1\cdot h\nabla a \\
&- h\zeta_0 \cdot (V-V_\tau)a - hV\cdot \zeta_1 a,
\end{split}
\end{equation}
provided that  the amplitude $a\in C^\infty(\R^n)$ satisfies the regularized transport equation 
\begin{equation}
\label{firsttransport}
-2\zeta_0 \cdot \nabla a + \zeta_0 \cdot V_\tau a = 0. 
\end{equation} 
One looks for a solution to \eqref{firsttransport} in the form $a=e^{\Phi_\tau}$, where $\Phi_\tau$ solves the equation
\begin{equation}
\label{CGO3}
-2\zeta_0\cdot\nabla \Phi_\tau + \zeta_0\cdot V_\tau=0 \quad \textrm{in}\quad \R^n. 
\end{equation}
Using that $\zeta\cdot\zeta=0$ where $\zeta=\zeta_0+\mathcal{O}(h)$, we have  $\zeta_0\cdot\zeta_0=0$ and $|\Re\zeta_0|=|\Im\zeta_0|=1$.   The operator   $N_{\zeta_0}:=\zeta_0\cdot\nabla$ is  therefore the $\bar\p$--operator in suitable linear coordinates. An inverse  of this operator is defined by
\[
(N_{\zeta_0}^{-1}f)(x) =\frac{1}{2\pi}\int_{\R^2} \frac{f(x-y_1\Re\zeta_0 -y_2\Im\zeta_0)}{y_1+iy_2}dy_1dy_2,\quad f\in C_0(\R^n).
\]
We have the following result from \cite[Lemma 4.6]{Salo_diss}. 
\begin{lem}
\label{lem_salo_1}
Let $f\in W^{k,\infty}(\R^n)$, $k\ge 0$, with $\emph{\supp}(f)\subset B(0,R)$. Then $\Phi=N_{\zeta_0}^{-1} f\in W^{k,\infty}(\R^n)$ satisfies $N_{\zeta_0}\Phi=f$ in $\R^n$, and we have 
\begin{equation}
\label{eq_salo_1}
\|\Phi\|_{W^{k,\infty}(\R^n)}\le C\|f\|_{W^{k,\infty}(\R^n)}, 
\end{equation}
where $C=C(R)$.  If $f\in C_0(\R^n)$, then $\Phi\in C(\R^n)$. 
\end{lem}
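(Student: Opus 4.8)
The plan is to recognize $N_{\zeta_0}^{-1}$ as one half of the Cauchy transform in a suitable complex variable, and then to read off all of the stated properties from the standard one-variable $\bar\partial$-calculus. First I would fix coordinates: from $\zeta_0\cdot\zeta_0=0$ and $|\Re\zeta_0|=|\Im\zeta_0|=1$ one gets $\Re\zeta_0\cdot\Im\zeta_0=0$, so $\Re\zeta_0$, $\Im\zeta_0$ form an orthonormal pair; completing it to an orthonormal basis of $\R^n$ yields coordinates $(y_1,y_2,x'')\in\R\times\R\times\R^{n-2}$ in which $N_{\zeta_0}=\zeta_0\cdot\nabla=\partial_{y_1}+i\partial_{y_2}$, acting only in the $(y_1,y_2)$ variables with $x''$ a spectator. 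Setting $w=y_1+iy_2$ this is $2\bar\partial_w$, and $N_{\zeta_0}^{-1}$ as written in the text is exactly convolution in the $(y_1,y_2)$-plane against the kernel $E(w)=\tfrac{1}{2\pi w}$, i.e. one half of the Cauchy--Pompeiu kernel; since $|E(w)|=(2\pi|w|)^{-1}$ we have $E\in L^1_{\mathrm{loc}}(\R^2)$, so $\Phi=E\ast f$ is well defined whenever $f$ is bounded with compact support. The identity $N_{\zeta_0}\Phi=f$ in $\R^n$ then follows from the classical Cauchy--Green formula $N_{\zeta_0}E=\delta_0$ in $\mathcal{D}'(\R^2)$ by a routine distributional computation, the $x''$ variables playing no role since $E$ is independent of them.

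The estimate is the only point requiring a little care. For fixed $x$ the integrand defining $\Phi(x)$ vanishes unless $x-y_1\Re\zeta_0-y_2\Im\zeta_0\in B(0,R)$, i.e. unless $(y_1,y_2)$ lies in a disk of radius $R$ in the plane; since $|y|^{-1}$ is radially decreasing, its integral over any such disk is at most $\int_{B(0,R)}|y|^{-1}\,dy=2\pi R$, whence
\[
\|\Phi\|_{L^\infty(\R^n)}\le\tfrac{1}{2\pi}\,(2\pi R)\,\|f\|_{L^\infty(\R^n)}=R\,\|f\|_{L^\infty(\R^n)}.
\]
For $k\ge1$, since $E$ is translation invariant and independent of $x''$ and $f$ has compact support, one has $\partial^\alpha\Phi=E\ast(\partial^\alpha f)$ for every multi-index with $|\alpha|\le k$; the derivatives are transferred onto $f$ by differentiation under the integral sign / integration by parts, and, crucially, no principal-value subtlety arises because they land on $f$ rather than on the singular kernel. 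Each $\partial^\alpha f$ is again supported in $B(0,R)$ with $\|\partial^\alpha f\|_{L^\infty}\le\|f\|_{W^{k,\infty}}$, so applying the $L^\infty$ bound to $\partial^\alpha f$ gives $\Phi\in W^{k,\infty}(\R^n)$ with $\|\Phi\|_{W^{k,\infty}(\R^n)}\le C(R)\,\|f\|_{W^{k,\infty}(\R^n)}$, in fact with $C(R)=R$ in either of the usual conventions for the $W^{k,\infty}$ norm. I expect this transfer-of-derivatives step --- making it rigorous uniformly in $x$ and keeping the constant dependent on the data only through $R$ and $\|f\|_{W^{k,\infty}}$ --- to be the main (indeed essentially the only) obstacle; everything else is classical.

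Finally, when $f\in C_0(\R^n)$ I would show $\Phi\in C(\R^n)$ by splitting $E=E\mathbf{1}_{\{|w|<\varepsilon\}}+E\mathbf{1}_{\{|w|\ge\varepsilon\}}$: the second kernel is bounded, so $E\mathbf{1}_{\{|w|\ge\varepsilon\}}\ast f$ is continuous by dominated convergence, while $\|E\mathbf{1}_{\{|w|<\varepsilon\}}\ast f\|_{L^\infty}\le\|f\|_{L^\infty}\int_{|w|<\varepsilon}|E|\,dw=\varepsilon\|f\|_{L^\infty}\to0$ as $\varepsilon\to0$. Hence $\Phi$ is a locally uniform limit of continuous functions and therefore continuous. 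The $k=0$ statements are already contained in the previous paragraph.
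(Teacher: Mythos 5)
Your proposal is correct and follows the standard route: the paper does not prove this lemma itself but quotes it from \cite{Salo_diss} (Lemma 4.6 there), where the argument is along the same lines as yours, namely identifying $N_{\zeta_0}^{-1}$ with convolution in the plane spanned by the orthonormal pair $\Re\zeta_0$, $\Im\zeta_0$ against the Cauchy kernel $\tfrac{1}{2\pi w}$, using $\bar\partial_w \tfrac{1}{\pi w}=\delta_0$ for the identity $N_{\zeta_0}\Phi=f$, and exploiting the compact support of $f$ for the bounds. Your two key observations --- that for each fixed $x$ the $(y_1,y_2)$-integration is confined to a disk of radius at most $R$ (not necessarily centered at the origin, which the radial monotonicity of $|y|^{-1}$ handles) and that derivatives are transferred onto $\partial^\alpha f$, which is again bounded and supported in $B(0,R)$ --- are precisely what yields $C=C(R)$, so no essential step is missing.
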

By Lemma \ref{lem_salo_1}, we see that  $\Phi_\tau(x,\zeta_0; \tau):=\frac{1}{2}N_{\zeta_0}^{-1}(\zeta_0\cdot V_\tau)\in (C^\infty \cap L^\infty)(\R^n)$ satisfies  \eqref{CGO3}. Thus, by Proposition \ref{approximationestimates} and Lemma \ref{lem_salo_1}, we have
\begin{equation}
\label{phiestimate1}
\|\Phi_\tau\|_{L^\infty(\mathbb{R}^n)} = \mathcal{O}(1), \quad \|\nabla \Phi_\tau\|_{L^\infty(\mathbb{R}^n)} = \mathcal{O}(\tau^{-1}) \quad \textrm{as} \quad  \tau\rightarrow 0.
\end{equation}
	
We now need the following result established in \cite[Lemma 3.1]{Syl_Uhl_1987}:
\begin{lem}
\label{lem_SU}
Let $-1<\delta<0$ and let $f\in L^2_{\delta+1}(\R^n)$. Then there exists a constant $C>0$, independent of $\zeta_0$, such that 
\[
\|N^{-1}_{\zeta_0} f\|_{L^2_\delta(\R^n)}\le C\|f\|_{L^2_{\delta+1}(\R^n)}.
\]
\end{lem}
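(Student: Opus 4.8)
The plan is to put the operator into a normal form, recognize $N_{\zeta_0}^{-1}$ as convolution against an explicit kernel, and then prove the weighted $L^2$ bound by Schur's test. First I would use the hypotheses $\zeta_0\cdot\zeta_0=0$, $|\Re\zeta_0|=|\Im\zeta_0|=1$, which force $\Re\zeta_0$ and $\Im\zeta_0$ to be orthonormal, to choose an orthogonal map $O$ of $\R^n$ with $Oe_1=\Re\zeta_0$ and $Oe_2=\Im\zeta_0$. A direct computation gives $\big(N_{\zeta_0}u\big)\circ O=(\partial_{x_1}+i\partial_{x_2})(u\circ O)$, and since $L^2_\delta(\R^n)$ is rotation invariant, composing everything with $O$ reduces the lemma to the single operator $N:=\partial_{x_1}+i\partial_{x_2}$; any resulting constant depends only on $n$ and $\delta$, which is precisely the required uniformity in $\zeta_0$.

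Writing $\R^n=\R^2_{x'}\times\R^{\,n-2}_{x''}$, the integral formula in the excerpt identifies $N^{-1}$ with convolution against $K(x)=\frac{1}{2\pi}\frac{1}{x_1+ix_2}\,\delta_0(x'')$, the planar Cauchy kernel in $x'$ tensored with a Dirac mass in $x''$. I would then restate the target inequality as the $L^2(\R^n)$-boundedness of the integral operator $\tilde T f=\langle x\rangle^{\delta}\,K*\!\big(\langle\cdot\rangle^{-\delta-1}f\big)$, which has kernel $\tilde\kappa(x,y)=\langle x\rangle^{\delta}K(x-y)\langle y\rangle^{-\delta-1}$, supported on $\{x''=y''\}$.

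The main step is Schur's test with the symmetric choice $p(x)=q(x)=\langle x\rangle^{-1}$. Carrying out the $y''$- (resp. $x''$-) integration against the Dirac mass and writing $R=\langle x''\rangle$, both Schur integrals collapse to the planar estimate
\[
\int_{\R^2}\frac{(R^2+|w|^2)^{-c/2}}{|u-w|}\,dw\ \le\ C(c)\,\big(R^2+|u|^2\big)^{(1-c)/2},\qquad u\in\R^2,\ R\ge 1,
\]
used once with $c=\delta+2$ (for the forward integral) and once with $c=1-\delta$ (for the transposed one). Since $-1<\delta<0$, both exponents lie in the open interval $(1,2)$; this is exactly what makes the estimate true — local integrability of $1/|u-w|$ in the plane controls the region $w\approx u$, and the condition $c+1>2$ gives convergence at infinity. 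Feeding the two instances back in yields $\int|\tilde\kappa(x,y)|p(y)\,dy\lesssim\langle x\rangle^{\,\delta+1-(\delta+2)}=\langle x\rangle^{-1}$ and, symmetrically, $\int|\tilde\kappa(x,y)|q(x)\,dx\lesssim\langle y\rangle^{-1}$, so Schur's lemma gives $\|\tilde T\|_{L^2\to L^2}\le C(n,\delta)$, which is the claim.

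I expect the only genuine work to be the displayed two-dimensional estimate: one splits $\R^2_w$ into the regions $\{|w|\lesssim\max(R,|u|)\}$, $\{|w-u|\lesssim\max(R,|u|)\}$, and the complementary far region, bounds $(R^2+|w|^2)^{-c/2}$ crudely by $R^{-c}$ for $|w|\lesssim R$ and by $|w|^{-c}$ for $|w|\gtrsim R$, and checks that each regime contributes $\lesssim\max(R,|u|)^{1-c}\sim(R^2+|u|^2)^{(1-c)/2}$; the constraint $1<c<2$, equivalently $-1<\delta<0$, is used in every regime and the endpoints genuinely fail. A cleaner conceptual route would be to pass to the Fourier side, where $L^2_\delta$ becomes the Bessel potential space $H^\delta$ and the claim becomes boundedness of multiplication by $(\zeta_0\cdot\xi)^{-1}$ from $H^{\delta+1}$ to $H^\delta$ — a loss of one derivative across the codimension-two set $\{\zeta_0\cdot\xi=0\}$ — but making this rigorous reproduces essentially the same computation, so I would present the self-contained Schur-test argument.
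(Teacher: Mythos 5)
Your proposal is correct, but note that the paper itself contains no proof of this lemma: it is quoted verbatim from Sylvester--Uhlmann (cited as [Lemma 3.1] of that paper), so what you have written is a self-contained re-proof of a classical estimate rather than a variant of an argument in the text. Your argument checks out: the conditions $\zeta_0\cdot\zeta_0=0$, $|\Re\zeta_0|=|\Im\zeta_0|=1$ do make $\Re\zeta_0,\Im\zeta_0$ orthonormal, the rotation $O$ conjugates $N_{\zeta_0}$ to $\partial_{x_1}+i\partial_{x_2}$ and leaves the weights $(1+|x|^2)^{\delta/2}$ invariant (whence the uniformity in $\zeta_0$), and the explicit inverse is convolution with the planar Cauchy kernel tensored with a Dirac mass in $x''$. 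The Schur test with $p=q=\langle x\rangle^{-1}$ then reduces, after scaling out $R=\langle x''\rangle$, to the $R=1$ planar bound $\int_{\R^2}\langle w\rangle^{-c}|u-w|^{-1}dw\lesssim\langle u\rangle^{1-c}$ for $1<c<2$, applied with $c=\delta+2$ and $c=1-\delta$; both lie in $(1,2)$ exactly when $-1<\delta<0$, and the resulting bounds $\langle x\rangle^{-1}$, $\langle y\rangle^{-1}$ are as you state. One small presentational point: since the kernel contains the factor $\delta_0(x''-y'')$, the cleanest rigorous formulation is to observe that the operator acts slicewise in the two-plane, prove the weighted $L^2(\R^2)$ bound for each fixed $x''$ with constant uniform in the parameter $R=\langle x''\rangle$ (which is precisely your two collapsed Schur integrals), and then integrate the squares in $x''$; this avoids invoking Schur's lemma for a measure-valued kernel. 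With that phrasing, your proof is complete and the constant depends only on $\delta$, as required.
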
 
Here, 
\[
\|f\|_{L^2_\delta(\R^n)}^2=\int_{\R^n}(1+|x|^2)^{\delta}|f(x)|^2dx. 
\]
Using the fact that $V_\tau$ has compact support uniformly in $\tau$, by Lemma \ref{lem_SU} and Proposition \ref{approximationestimates}, we have \begin{equation}
\label{phiestimate2}
 \|\nabla \Phi_\tau\|_{L^2(\Omega)} = \mathcal{O}(1), \quad \|\p^\alpha \Phi_\tau\|_{L^2(\Omega)} = o(\tau^{-1}), \quad |\alpha|=2,\quad \tau\rightarrow 0.
\end{equation}	
Setting $\Phi(\cdot,\zeta_0):=N_{\zeta_0}^{-1}(\frac{1}{2}\zeta_0\cdot V)\in L^\infty(\R^n)$, it follows from Proposition \ref{approximationestimates} and Lemma \ref{lem_SU} that $\|\Phi - \Phi_\tau\|_{L^2(\Omega)} = o(\tau)$ as $\tau\to 0$. 
	
Recalling that $a = e^{\Phi_\tau}$, and using \eqref{phiestimate1} and \eqref{phiestimate2}, we get 
\begin{equation}
\label{3.12}
\|a\|_{L^\infty(\Omega)} = \mathcal{O}(1), \quad \|\nabla a\|_{L^\infty(\Omega)} = \mathcal{O}(\tau ^{-1}),\quad \|\nabla a\|_{L^2(\Omega)} = \mathcal{O}(1).
\end{equation}  
Furthermore, we have $\Delta a=a\nabla\Phi_\tau\cdot \nabla\Phi_\tau +a\Delta \Phi_\tau$. Using  \eqref{phiestimate1}, \eqref{phiestimate2}, \eqref{3.12}, and the Gagliardo-Nirenberg inequality for $u \in (L^\infty \cap H^2)(\Omega)$, see \cite[page 313]{Brezisbook}, \begin{equation}
\|\nabla u\|_{L^4(\Omega)} \le C\|u\|_{L^\infty(\Omega)}^{1/2}\| u\|_{H^2(\Omega)}^{1/2},
\end{equation}  
with $u=\Phi_\tau$,  we obtain that 
\begin{equation}
\label{star}
\|\Delta a\|_{L^2(\Omega)} \le \|a\|_{L^\infty(\Omega)}(\|\nabla \Phi_\tau\|_{L^4(\Omega)}^2+\|\Delta \Phi_\tau\|_{L^2(\Omega)})= o(\tau ^{-1}).
\end{equation} 
	
The discussion in \cite[Section 3]{KKGU2017geo} can now be applied exactly as it stands, and we obtain that the norm of  the right hand side of (\ref{CGO2}) in $H^{-1}_{\textrm{scl}}(\Omega)$ does not exceed $h^2o(\tau^{-1}) + ho(\tau)$ as $\tau \rightarrow 0$. Choosing now $\tau=h^{1/2}$, applying Proposition \ref{solvability result} to (\ref{CGO2}) with $\alpha=\Re \zeta_0$ and using the fact that $e^{\frac{x\cdot\zeta_1}{h}}=\mathcal{O}(1)$ with all derivatives and  $(hD_x)^\alpha (e^{\frac{ix\cdot \text{Im} \zeta_0}{h}})=\mathcal{O}(1)$ for all $\alpha$, we see that there exists a solution $r\in H^1(\Omega)$ of (\ref{CGO2}) such that $\|r\|_{H^1_{\textrm{scl}}(\Omega)}=o(h^{1/2})$ as $h\to 0$. 
	
We summarize the discussion above in the following  proposition. 
\begin{prop}
\label{prop_cgo_solutions}
Let $V,W\in (H^1\cap L^\infty)(\Omega; \C^n)$, $q\in L^\infty(\Omega; \C)$, and let $\zeta\in \C^n$ be such that $\zeta\cdot\zeta=0$, $\zeta = \zeta_0 + \zeta_1$ with $\zeta_0$ being independent of $h>0$,  $|\emph{\text{Re}}\,\zeta_0|=|\emph{\text{Im}}\, \zeta_0|=1$, and $\zeta_1 = \mathcal{O}(h)$ as $h\rightarrow 0$. Then for all $h>0$ small enough, there exists a solution $u(x,\zeta;h)\in H^1(\Omega)$ to the equation $P_{V,W,q}u=0$ in $\Omega$, of the form
\[
u(x,\zeta;h)=e^{\frac{x\cdot\zeta}{h}}(a(x,\zeta_0;h)+r(x,\zeta;h)),
\] 
where $a=e^{\Phi_h(x,\zeta_0)}$ with  $\Phi_h(\cdot,\zeta_0)\in (C^\infty \cap L^\infty)(\R^n)$. We have  $\|\Phi - \Phi_h\|_{L^2(\Omega)} = o(h^{1/2})$ as $h\to 0$ where  $\Phi(\cdot,\zeta_0):=N_{\zeta_0}^{-1}(\frac{1}{2}\zeta_0\cdot V)\in L^\infty(\R^n)$. 
Moreover, $a$ satisfies 
\begin{equation}
\label{3.12_new}
\begin{aligned}
&\|a\|_{L^\infty(\Omega)} = \mathcal{O}(1), \quad \|\nabla a\|_{L^\infty(\Omega)} = \mathcal{O}(h ^{-\frac{1}{2}}),\quad \|\nabla a\|_{L^2(\Omega)} = \mathcal{O}(1), \\
 &\|\Delta a\|_{L^2(\Omega)} = o(h ^{-\frac{1}{2}}).
 \end{aligned}
\end{equation} 
The remainder $r$ is such that 
$ \|r\|_{H^1_{\text{scl}}(\Omega)}=o(h^{\frac{1}{2}})$,  $h\to 0$.
\end{prop}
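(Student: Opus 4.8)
The statement collects the construction carried out in the preceding discussion, so the plan is simply to run that construction with a definite choice of the regularization parameter and to invoke Proposition \ref{solvability result} for the remainder. First I would fix $\tau = h^{1/2}$ and set $\Phi_h := \tfrac12 N_{\zeta_0}^{-1}(\zeta_0\cdot V_\tau)$, where $V_\tau = V * \psi_\tau$ is the mollification of a compactly supported $(H^1\cap L^\infty)$-extension of $V$. By Lemma \ref{lem_salo_1}, $\Phi_h$ solves the regularized transport equation \eqref{CGO3} and lies in $(C^\infty\cap L^\infty)(\R^n)$; by Proposition \ref{approximationestimates} and Lemma \ref{lem_SU} it obeys \eqref{phiestimate1} and \eqref{phiestimate2}, so $a = e^{\Phi_h}$ satisfies the first three bounds in \eqref{3.12_new}. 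The bound $\|\Delta a\|_{L^2(\Omega)} = o(h^{-1/2})$ then follows from $\Delta a = a(\nabla\Phi_h\cdot\nabla\Phi_h + \Delta\Phi_h)$, from $\|\Delta\Phi_h\|_{L^2(\Omega)} = o(\tau^{-1})$, and from the Gagliardo--Nirenberg inequality $\|\nabla\Phi_h\|_{L^4(\Omega)}^2 \le C\|\Phi_h\|_{L^\infty(\Omega)}\|\Phi_h\|_{H^2(\Omega)} = o(\tau^{-1})$, exactly as in \eqref{star}.

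With $a$ in hand, the remainder $r$ must solve \eqref{CGO2}, and I would estimate the right-hand side of \eqref{CGO2} in $H^{-1}_{\textrm{scl}}(\Omega)$ term by term, following \cite[Section 3]{KKGU2017geo}. The two borderline contributions are $h^2\Delta a$, controlled via $L^2\hookrightarrow H^{-1}_{\textrm{scl}}$ by $h^2 o(\tau^{-1})$, and $h\zeta_0\cdot(V-V_\tau)a$, controlled by $h\|V-V_\tau\|_{L^2(\Omega)}\|a\|_{L^\infty(\Omega)} = h\,o(\tau)$; all the remaining terms are $\mathcal{O}(h^2)$ in $H^{-1}_{\textrm{scl}}(\Omega)$, using that $V\in L^\infty$, $\nabla\cdot W\in L^2$, $q\in L^\infty$ and $\zeta_1 = \mathcal{O}(h)$. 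With $\tau = h^{1/2}$ both borderline terms become $o(h^{3/2})$, so the right-hand side of \eqref{CGO2} has $H^{-1}_{\textrm{scl}}$-norm $o(h^{3/2})$. Splitting $e^{x\cdot\zeta/h} = e^{x\cdot\Re\zeta_0/h}\,g(x)$ with $g(x) = e^{(ix\cdot\Im\zeta_0 + x\cdot\zeta_1)/h}$, and noting that $g$, $g^{-1}$ and all their semiclassical derivatives $(hD_x)^\alpha$ are $\mathcal{O}(1)$ on $\Omega$ so that multiplication by $g^{\pm 1}$ is bounded on $H^{\pm 1}_{\textrm{scl}}(\Omega)$, the equation \eqref{CGO2} reduces to one to which Proposition \ref{solvability result} applies with $\varphi(x) = \Re\zeta_0\cdot x$. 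That proposition produces a solution with $H^1_{\textrm{scl}}$-norm at most $\tfrac{C}{h}\,o(h^{3/2}) = o(h^{1/2})$, hence $r$ with $\|r\|_{H^1_{\textrm{scl}}(\Omega)} = o(h^{1/2})$. Finally, with $\Phi := N_{\zeta_0}^{-1}(\tfrac12\zeta_0\cdot V)$, the bound $\|\Phi - \Phi_h\|_{L^2(\Omega)} = o(h^{1/2})$ is immediate from $\|V - V_{h^{1/2}}\|_{L^2(\R^n)} = o(h^{1/2})$ and Lemma \ref{lem_SU}.

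The one step that is not pure bookkeeping is the bound $\|\Delta a\|_{L^2(\Omega)} = o(h^{-1/2})$: a bound of the form $\mathcal{O}(h^{-1/2})$ would give only $\|r\|_{H^1_{\textrm{scl}}} = \mathcal{O}(1)$ after dividing by $h$ in Proposition \ref{solvability result}, which is too weak for the later applications, so the strict $o(\tau^{-1})$ decay is essential. It is precisely here that the hypothesis $V\in H^1\cap L^\infty$ is used at full strength --- through the $o(\tau)$ rate in $\|V-V_\tau\|_{L^2}$, the $o(\tau^{-1})$ rate in $\|\partial^\alpha V_\tau\|_{L^2}$ for $|\alpha|=2$, and the Gagliardo--Nirenberg interpolation that converts these into control of $\|\nabla\Phi_h\|_{L^4}^2$ --- and it is the matching choice $\tau = h^{1/2}$ that balances the two borderline terms in \eqref{CGO2}. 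Everything else (the transport equation, the $W^{k,\infty}$ and weighted-$L^2$ mapping properties of $N_{\zeta_0}^{-1}$, and the final solvability step) is routine once these estimates are recorded.
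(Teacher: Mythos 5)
Your proposal is correct and follows essentially the same route as the paper's own construction: mollify $V$, solve the regularized transport equation via $N_{\zeta_0}^{-1}$ using Lemma \ref{lem_salo_1}, Lemma \ref{lem_SU} and Proposition \ref{approximationestimates}, get the $\Delta a$ bound through Gagliardo--Nirenberg as in \eqref{star}, estimate the right-hand side of \eqref{CGO2} by $h^2o(\tau^{-1})+h\,o(\tau)$, choose $\tau=h^{1/2}$, and apply Proposition \ref{solvability result} after factoring out the bounded oscillatory factor. The only quibble is your closing aside: an $\mathcal{O}(h^{-1/2})$ bound on $\|\Delta a\|_{L^2(\Omega)}$ would yield $\|r\|_{H^1_{\textrm{scl}}(\Omega)}=\mathcal{O}(h^{1/2})$, not $\mathcal{O}(1)$, but this remark plays no role in the argument itself.
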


\section{Proof of Theorem \ref{thm_main_complex} and Corollary \ref{thm_main_real}}
	
\label{sec_proof_main}
Let $A_1, A_2 \in (H^1\cap L^{\infty})(\Omega; \mathbb{C}^n)$. Using a standard Seeley extension argument, we observe that the vector fields $A_1$ and $A_2$ can be extended to elements of  $(H^1\cap L^\infty\cap\mathcal{E}')(\R^n; \C^n)$, see 
\cite[Section 2.2]{KKGU2016} for a detailed discussion. Letting $v:=(A_1-A_2)1_{\R^n\setminus\overline{\Omega}}\in (\mathcal{E}'\cap L^\infty)(\R^n;\C^n)$, we see that $v\in H^1(\R^n; \C^n)$ in view of the fact that $\Lambda_{A_1, q_1} = \Lambda_{A_2, q_2}$ together with   Proposition \ref{boundaryresult}. Here $1_{\R^n\setminus\overline{\Omega}}$ is the characteristic function of the set $\R^n\setminus\overline{\Omega}$. Replacing $A_2$ by $A_2+v$, we achieve that $A_1=A_2$ on $\R^n\setminus \overline{\Omega}$. We also extend $q_1$ and $q_2$ to all of $\R^n$ so that $q_1=q_2=0$ on  $\R^n\setminus \Omega$. 

Let $B\subset \R^n$ be a large open ball such that $\Omega\subset\subset B$.  Using that $\Lambda_{A_1, q_1} = \Lambda_{A_2, q_2}$ and $A_1=A_2$, $q_1=q_2$ on $B\setminus \Omega$,  similarly to  \cite[Proposition 3.4]{KKGU2014}, we conclude that  $C_{A_1, q_1}(B)=C_{A_2, q_2}(B)$, where 
\[
C_{A_j, q_j}(B)=\{(u|_{\p B},\p_\nu u|_{\p B}): u\in H^1(B), \, L_{A_j, q_j}u=0 \text{ in } B \}
\]
is the set of the Cauchy data for the operator $L_{A_j,q_j}$ in $B$.  Using that $C_{A_1, q_1}(B)=C_{A_2, q_2}(B)$, similarly to the proof of \cite[Proposition 4.1]{KKGU2017geo}  we then derive the following integral identity,  
\begin{equation} 
\label{intidentity2}
\int_B [(2i(A_2-A_1)\cdot \nabla u_1)u_2 + (q_1 - q_2)u_1u_2]dx = 0
\end{equation}  
for all $u_1, u_2 \in H^1(B)$ solving 
\begin{equation}
\label{eq_2_1new}
-\Delta u_1-2iA_1\cdot \nabla u_1+q_1u_1=0\quad \textrm{in}\quad B,
\end{equation}
\begin{equation}
\label{eq_2_2new}
-\Delta u_2+2iA_2\cdot \nabla u_2+2i(\nabla \cdot A_2)u_2+q_2u_2=0\quad \textrm{in}\quad B.
\end{equation}

We shall use \eqref{intidentity2} with $u_1$ and $u_2$ being CGO solutions to  \eqref{eq_2_1new}  and \eqref{eq_2_2new}. To this end,  let $\xi,\mu_1,\mu_2\in\R^n$ be such that $|\mu_1|=|\mu_2|=1$ and $\mu_1\cdot\mu_2=\mu_1\cdot\xi=\mu_2\cdot\xi=0$. Similarly to \cite{KKGU2014} and \cite{Sun_1993}, we set 
\begin{equation}
\label{eq_zeta_1_2}
\zeta_1=\frac{ih\xi}{2}+\mu_1 + i\sqrt{1-h^2\frac{|\xi|^2}{4}}\mu_2 , \quad 
\zeta_2=\frac{ih\xi}{2}-\mu_1-i\sqrt{1-h^2\frac{|\xi|^2}{4}}\mu_2,
\end{equation}
so that $\zeta_j\cdot\zeta_j=0$, $j=1,2$, and $\frac{\zeta_1+\zeta_2}{h}=i\xi$.   Moreover, $\zeta_1= \mu_1+ i\mu_2+\mathcal{O}(h)$ and $\zeta_2= -\mu_1- i\mu_2+\mathcal{O}(h)$ as $h\to 0$.  By Proposition \ref{prop_cgo_solutions},  for all $h>0$ small enough, there exists a solution $u_1(x,\zeta_1;h)\in H^1(B)$ to \eqref{eq_2_1new} of the form
\begin{equation}
\label{eq_u_1}
u_1(x,\zeta_1;h)=e^{\frac{x\cdot\zeta_1}{h}}(e^{\Phi_{1,h}(x,\mu_1+i\mu_2)}+r_1(x,\zeta_1;h)),
\end{equation} 
and 
$u_2(x,\zeta_2;h)\in H^1(B)$ to \eqref{eq_2_2new}  of the form
\begin{equation}
\label{eq_u_2}
u_2(x,\zeta_2;h)=e^{\frac{x\cdot\zeta_2}{h}}(e^{\Phi_{2,h}(x,-\mu_1-i\mu_2)}+r_2(x,\zeta_2;h)),
\end{equation}
where $\Phi_{j,h}\in (C^\infty \cap L^\infty)(\R^n)$, and 
\begin{equation}
\label{5.18}
\|\Phi_j- \Phi_{j,h}\|_{L^2(B)}=o(h^{\frac{1}{2}}),\quad  h \to 0, \quad  j = 1,2.
\end{equation}
Here
\begin{equation}
\label{eq_phi_1_def}
\Phi_{1}(\cdot,\mu_1+i\mu_2):=N_{\mu_1+i\mu_2}^{-1}
((\mu_1+i\mu_2)\cdot (-iA_1))\in L^\infty(\R^n),
\end{equation}
and 
\begin{equation}
\label{eq_phi_2_def}
\Phi_{2}(\cdot,-\mu_1-i\mu_2):=N_{-\mu_1-i\mu_2}^{-1}
((-\mu_1-i\mu_2)\cdot (iA_2))\in L^\infty(\R^n).
\end{equation}
The remainder $r_j$ is such that
\begin{equation}
\label{5.22}
\|r_j\|_{H^1_{\text{scl}}(B)}=o(h^{\frac{1}{2}}), \quad  h\to 0, \quad j=1,2.
\end{equation} 
We next substitute the CGO solutions $u_1$ and $u_2$, given by \eqref{eq_u_1} and \eqref{eq_u_2}, into the integral identity  \eqref{intidentity2}, multiply it by $h$, and let $h\to 0$. 
First we have
\begin{equation}
\label{hnablau1}
h(\nabla u_1)u_2 = \zeta_1 e^{ix \cdot \xi}(a_1+r_1)(a_2+r_2) + he^{ix \cdot \xi}(\nabla a_1 + \nabla r_1)(a_2+r_2),
\end{equation}
where  $a_j=e^{\Phi_{j,h}}$, $j=1,2$.

Using \eqref{5.22} and \eqref{3.12_new}, we obtain that
\begin{equation}
\label{eq_200_1}
\bigg|\int_B 2i(A_2-A_1)\cdot \zeta_1 e^{ix\cdot \xi}(a_1r_2+r_1a_2+r_1r_2)dx\bigg| = o(h^\frac{1}{2}) \to 0,  \quad h \to  0,
\end{equation}
\begin{equation}
\label{eq_200_2}
\bigg|h\int_B 2ie^{ix\cdot \xi}(A_2-A_1) \cdot (\nabla a_1 +\nabla r_1)(a_2+r_2)dx\bigg| = o(h^\frac{1}{2}), \quad h \to  0,
\end{equation}
\begin{equation}
\label{eq_200_3}
\bigg|h\int_B (q_2-q_1)e^{ix \cdot \xi} (a_1 +r_1)(a_2+r_2)dx\bigg| = \mathcal{O}(h),  \quad h \to  0.
\end{equation} 
Therefore, from \eqref{intidentity2} in view of \eqref{eq_200_1}, \eqref{eq_200_2}, and \eqref{eq_200_3}, we get  
\[
{\lim_{h \to 0}} \int_B (A_2-A_1)\cdot (\mu_1+i\mu_2)e^{ix\cdot \xi}e^{\Phi_{1,h}+\Phi_{2,h}}dx =0.
\]
Using \eqref{5.18}, similarly to \cite{KKGU2014},  we obtain that  
\begin{equation}
\label{A1=A2}
 (\mu_1+i\mu_2) \cdot \int_{\R^n} (A_2-A_1)e^{ix\cdot \xi}e^{\Phi_1+\Phi_2}dx=0. 
\end{equation} 
Here the integration is extended to all of $\R^n$ since $\supp(A_2-A_1)\subset B$.

In view of $N_{-\zeta}^{-1}f=-N_\zeta^{-1}f$, we have 
\begin{equation}
\label{eq_sum_phase}
\Phi_{1}+\Phi_{2}=N_{\mu_1+i\mu_2}^{-1}(- i(\mu_1+i\mu_2)\cdot (A_1-A_2)).  
\end{equation}
An application of \cite[Proposition 3.3]{KKGU2014} allows us to get
\begin{equation}
\label{eq_without_phases}
(\mu_1+i\mu_2)\cdot\int_{\R^n} (A_2-A_1) e^{ix\cdot\xi} dx=0, 
\end{equation} 
and we conclude as in  \cite{KKGU2014} that 
\begin{equation}
\label{eq_rec_dA}
d(A_2-A_1)=0\quad \text{in}\quad  \R^n,
\end{equation}
 see also \cite{EskinRal}. Here we view the vector field $A:=A_2-A_1$ as a one form and $dA$ is a two form given by
\[
dA=\sum_{1\le j<k\le n} (\p_{x_j}A_k-\p_{x_k}A_j)dx_j\wedge dx_k.
\]

Our next goal is to recover $A$ and $q$ up to a gauge transformation. To this end, similarly to  \cite{KKGU2014} we observe that \eqref{eq_rec_dA} implies that there exists 
\begin{equation}
\label{eq_func_varphi_referee}
\varphi\in (W^{1,\infty}\cap H^2)(\R^n)\text{  with }\supp(\varphi)\subset B
\end{equation}
such that 
$\nabla \varphi=A_1-A_2$.  It follows that 
\begin{equation}
\label{5.31}
\begin{split}
e^{-i\varphi} \circ (-\Delta -2iA_2\cdot \nabla+ q_2) \circ e^{i\varphi}
&= -\Delta -2i A_1 \cdot \nabla +\nabla  \cdot (-i\nabla \varphi) +\tilde{q_2} = P_{V,W, \tilde{q_2}},
\end{split}
\end{equation}
where
\begin{equation}
\label{5.32}
\tilde{q_2} = q_2 + 2A_2 \cdot \nabla \varphi +(\nabla \varphi)^2 \in L^\infty(\R^n;\C),
\end{equation} 
\begin{align*}
V= -2iA_1\in (H^1\cap L^\infty)(\R^n;\C^n),\quad 
W=-i\nabla \varphi\in  (H^1\cap L^\infty)(\R^n;\C^n).
\end{align*}
Associated to the operator $P_{V,W, \tilde{q_2}}$ is the set of the Cauchy data,
\[
C_{V,W,\tilde q_2}(B)=\{(u|_{\p B}, \p_\nu u|_{\p B}): u\in H^1(B), \ P_{V,W,\tilde{q_2}}u=0\}.
\] 
Using \eqref{5.31} and  arguing as in  \cite{KKGU2017geo}, we see that 
\[
C_{V,W,\tilde q_2}(B)=C_{A_2,q_2}(B), 
\]
and therefore,  $C_{V,W,\tilde q_2}(B)=C_{A_1,q_1}(B)$.  Similarly to  \cite{KKGU2017geo}  this equality of the sets of the Cauchy data  implies that the following integral identity holds, 
\begin{equation}
\label{2ndintidentity}
\int_B [(\tilde{q_2} - q_1)u_1u_2 +i\nabla \varphi \cdot \nabla (u_1u_2)]dx = 0,
\end{equation}  
for all $u_1, u_2 \in H^1(B)$ solving 
\begin{equation}
\label{2ndintidentity1}
-\Delta u_1-2iA_1\cdot \nabla u_1+q_1u_1=0\quad \textrm{in}\quad B,
\end{equation}
\begin{equation}
\label{2ndintidentity2}
-\Delta u_2+2iA_1\cdot \nabla u_2+2i(\nabla \cdot A_1)u_2 + \nabla \cdot (-i\nabla \varphi)u_2+ \tilde{q_2}u_2=0\quad \textrm{in}\quad B.
\end{equation}
	
The next step is to substitute CGO solutions to \eqref{2ndintidentity1} and \eqref{2ndintidentity2} into the integral identity \eqref{2ndintidentity}. To this end, let $\zeta_1, \zeta_2 \in \C^n$ be given by \eqref{eq_zeta_1_2} and let us recall from Proposition \ref{prop_cgo_solutions} that  for all $h>0$ small enough, there exists a solution $u_1(x,\zeta_1;h)\in H^1(B)$ to \eqref{2ndintidentity1} of the form
\begin{equation}
\label{u12ndsolution}
u_1(x,\zeta_1;h)=e^{\frac{x\cdot\zeta_1}{h}}(e^{\Phi_{1,h}(x,\mu_1+i\mu_2)}+r_1(x,\zeta_1;h)),
\end{equation} 
and $u_2(x,\zeta_2;h)\in H^1(B)$ to \eqref{2ndintidentity2} of the form
\begin{equation}
\label{u22ndsolution}
u_2(x,\zeta_2;h)=e^{\frac{x\cdot\zeta_2}{h}}(e^{\Phi_{2,h}(x,-\mu_1-i\mu_2)}+r_2(x,\zeta_2;h)),
\end{equation} 
where 
\begin{equation}
\label{eq_phi_1_2nddef}
\Phi_{1,h}(\cdot,\mu_1+i\mu_2):=N_{\mu_1+i\mu_2}^{-1}((\mu_1+i\mu_2)\cdot (-iA_{1,h})),
\end{equation}
and 
\begin{equation}
\label{eq_phi_2_2nddef}
\Phi_{2,h}(\cdot,-\mu_1-i\mu_2):=N_{-\mu_1-i\mu_2}^{-1}
((-\mu_1-i\mu_2)\cdot (iA_{1,h})).
\end{equation}
Here  $A_{1,h}$ is the regularization of $ A_1$ as above. Notice that $\Phi_{1,h}(\cdot,\mu_1+i\mu_2)+ \Phi_{2,h}(\cdot,-\mu_1-i\mu_2)=0$. Let us also recall that the remainders $r_j$ satisfy \eqref{5.22}. 

Letting $a_j = e^{\Phi_{j,h}}, j=1,2$, so that $a_1a_2=1$,  we write 
\begin{align*}
&u_1u_2=e^{ix\cdot \xi} (1+a_1r_2 + r_1a_2+r_1r_2), \\
&\nabla(u_1u_2)=i\xi e^{ix\cdot\xi}(1+a_1r_2 + r_1a_2+r_1r_2)+ e^{ix\cdot \xi}\nabla(a_1r_2 + r_1a_2+r_1r_2).
\end{align*}
It follows from \eqref{3.12_new} and \eqref{5.22} that 
\begin{equation}
\label{eq_500_1}
\int_B (\tilde q_2-q_1)u_1u_2dx\to \int_B (\tilde q_2-q_1)e^{ix\cdot\xi}dx, \quad h\to 0,
\end{equation}
\begin{equation}
\label{eq_500_2}
\int_B i\nabla \varphi \cdot i\xi e^{ix\cdot\xi}(1+a_1r_2 + r_1a_2+r_1r_2) dx\to \int_B i\nabla \varphi \cdot i\xi e^{ix\cdot\xi}dx, \quad h\to 0. 
\end{equation}
Using \eqref{5.22}, we also have
\begin{equation}
\label{eq_500_3}
\bigg|\int_B i\nabla \varphi \cdot e^{ix\cdot\xi} \nabla (r_1r_2)dx\bigg|\le C \|\nabla \varphi\|_{L^\infty}(\|\nabla r_1\|_{L^2}\|r_2\|_{L^2}+\|\nabla r_2\|_{L^2}\|r_1\|_{L^2})=o(1),
\end{equation}
as $h\to 0$.
Finally we claim that 
\begin{equation}
\label{eq_500_4}
\bigg|\int_B i\nabla \varphi \cdot e^{ix\cdot\xi} \nabla (a_1r_2+a_2r_1)dx\bigg|=o(h^{\frac{1}{2}}), \quad h\to 0. 
\end{equation}
Following \cite{Pohjola}, \cite{KKGU2017geo}, when establishing \eqref{eq_500_4}  we introduce the regularization $\varphi_\tau=\varphi* \psi_\tau\in C^\infty_0(\R^n)$, $\tau>0$. Here $\psi _\tau (x) = \tau ^{-n}\psi\bigg(\dfrac{x}{\tau}\bigg)$ with $\psi \in C_0^\infty(\mathbb{R}^n), 0\le \psi \le 1$, and $\int_{\mathbb{R}^n} \psi dx =1$. Assume also that $\psi$  is radial.  We have 
$\supp(\varphi_\tau)\subset B$, for all $\tau>0$ small enough.  Using that $\nabla \varphi\in (H^1\cap L^\infty)(\R^n)$ and Proposition \ref{approximationestimates}, we get 
\begin{equation}
\label{eq_500_5}
\|\nabla \varphi -\nabla \varphi_\tau\|_{L^2} = o(\tau),  \quad \|\Delta\varphi_\tau\|_{L^2}=\mathcal{O}(1), \quad \tau\to 0. 
\end{equation}
In view of \eqref{eq_500_5},  \eqref{3.12_new} and \eqref{5.22}, we obtain that 
\begin{align*}
\bigg|\int_B i\nabla \varphi \cdot e^{ix\cdot\xi} \nabla (a_1r_2)dx\bigg|\le \int_B | (\nabla \varphi -\nabla\varphi_\tau) \cdot  \nabla (a_1r_2)|dx + \int_B |(\Delta\varphi_\tau) a_1r_2|dx\\
=o(\tau)o(h^{-\frac{1}{2}})+ o(h^{\frac{1}{2}})=o(h^{\frac{1}{2}}), \quad h\to 0, 
\end{align*}
where we take $\tau=h$. The estimate \eqref{eq_500_4} follows.

Combining \eqref{2ndintidentity}, \eqref{eq_500_1},  \eqref{eq_500_2},  \eqref{eq_500_3}, and  \eqref{eq_500_4}, we get 
\[
\int_B \big( (\tilde q_2-q_1) +  i\nabla \varphi \cdot i\xi \big) e^{ix\cdot\xi}dx=0. 
\]
In other words, $\mathcal{F}(\tilde{q_2}-q_1 - i\Delta \varphi) = 0$ in the sense of distributions, and therefore, 
\begin{equation}
\label{eq_func_varphi_referee_2}
\tilde{q_2}-q_1 - i\Delta \varphi = 0 \quad  \text{in}\quad  \R^n. 
\end{equation}
In the view of \eqref{5.32}, the proof of Theorem \ref{thm_main_complex} is now complete. 

It follows from the proof of Theorem \ref{thm_main_complex}, in particular from \eqref{eq_func_varphi_referee} and \eqref{eq_func_varphi_referee_2},   in view of the fact that the vector fields $A_1$, $A_2$, and the scalar potentials $q_1$, $q_2$ are real-valued, that $\varphi\in (W^{1,\infty}\cap H^2)(\R^n; \R)$ with $\supp(\varphi)\subset B$, is such that  $\Delta\varphi=0$ in $\R^n$. Hence, $\varphi=0$. This completes the proof of Corollary \ref{thm_main_real}.

\section{Proofs of Corollary \ref{twofrequency}  and Corollary \ref{threefrequency}}
\label{tomography}

The goal of this section is to prove Corollary \ref{twofrequency}  and Corollary \ref{threefrequency}, by following the arguments of the paper \cite{AARN2015}, and verifying that they still go through in the present low regularity setting, once Theorem \ref{thm_main_complex}  and the boundary reconstruction result of Proposition \ref{boundaryresult} have been established. The following discussion is therefore provided mainly for the convenience of the reader.

Let the fluid parameters $c_j$, $\rho_j$, $v_j$ and $\alpha_j$ satisfy \eqref{eq_coeff_our}, and let us define $A_j(\omega)$ and $q_j(\omega)$ as in \eqref{eq_int_acoustic_tom_coeff}, $j=1,2$.  Using that $\Lambda_{A_1(\omega),q_1(\omega)}=\Lambda_{A_2(\omega), q_2(\omega)}$ for  $\omega=\omega_1$ and $\omega=\omega_2$, we conclude from Theorem \ref{thm_main_complex} that there is $\varphi(\omega)\in W^{1,\infty}(\Omega;\C)$ such that 
 \[
 A_1(\omega)=A_2(\omega)+\nabla \varphi(\omega), \quad q_1(\omega)=q_2(\omega)+2A_2(\omega)\cdot \nabla \varphi(\omega)
 +(\nabla \varphi(\omega))^2-i\Delta \varphi (\omega),
 \]
for $\omega=\omega_1, \omega_2$. Hence, 
\[
q_1(\omega)=q_2(\omega)+2A_2(\omega)\cdot  ( A_1(\omega)-A_2(\omega))
 +( A_1(\omega)-A_2(\omega))^2-i\nabla\cdot  ( A_1(\omega)-A_2(\omega)),
\]
and therefore, 
\begin{equation}
\label{eq 6.1}
q_2(\omega) - q_1(\omega) +A_1(\omega)^2-A_2(\omega)^2 -i\nabla \cdot (A_1(\omega) - A_2(\omega)) = 0,
\end{equation} 
for $\omega=\omega_1$ and $\omega=\omega_2$.
Taking the real and imaginary parts in \eqref{eq 6.1},  using \eqref{eq_int_acoustic_tom_coeff} and the following consequence of it,
\[
A_j(\omega)^2 = \omega ^2 \frac{v_j^2}{c_j^4} -\bigg(\frac{1}{2}\frac{\nabla \rho_j}{\rho_j}\bigg)^2+i\omega \frac{v_j}{c_j^2}\cdot \frac{\nabla \rho_j}{\rho_j}, \quad  j = 1,2, 
\]
we get 
\begin{equation}
\label{eq 6.3}
-\omega^2\bigg(\frac{1}{c_2^2} - \frac{1}{c_1^2}\bigg) + \omega ^2 \bigg(\frac{v_1^2}{c_1^4} - \frac{v_2^2}{c_2^4}\bigg) - \bigg(\frac{1}{2}\frac{\nabla \rho_1}{\rho_1}\bigg)^2 + \bigg(\frac{1}{2}\frac{\nabla \rho_2}{\rho_2}\bigg)^2 + \nabla \cdot \bigg(\frac{\nabla \rho_1}{2\rho_1} - \frac{\nabla \rho_2}{2\rho_2} \bigg)= 0,
\end{equation}
and 
\begin{equation}
\label{eq 6.4}	
\omega\bigg(\frac{v_1}{c_1^2} \cdot \frac{\nabla \rho_1}{\rho_1} - \frac{v_2}{c_2^2} \cdot \frac{\nabla \rho_2}{\rho_2}\bigg) - \omega \nabla \cdot \bigg(\frac{v_1}{c_1^2} - \frac{v_2}{c_2^2}\bigg)-2\omega \bigg(\frac{\alpha_2(\omega)}{c_2}-\frac{\alpha_1(\omega)}{c_1}\bigg) = 0,
\end{equation}
for $\omega=\omega_1$ and $\omega=\omega_2$. Using that $\omega_1\ne \omega_2$,  we obtain from \eqref{eq 6.3}  that 
\begin{equation}
\label{eq 6.5}
 \frac{v_1^2}{c_1^4} - \frac{v_2^2}{c_2^4} - \bigg(\frac{1}{c_2^2} - \frac{1}{c_1^2}\bigg) = 0, 
\end{equation}
\begin{equation}
\label{eq 6.6}
\nabla \cdot \bigg(\frac{\nabla \rho_1}{2\rho_1} - \frac{\nabla \rho_2}{2\rho_2} \bigg) - \bigg(\frac{1}{2}\frac{\nabla \rho_1}{\rho_1}\bigg)^2 + \bigg(\frac{1}{2}\frac{\nabla \rho_2}{\rho_2}\bigg)^2= 0, 
\end{equation}
and from  \eqref{eq 6.4} that
\begin{equation}
\label{eq 6.7}
\frac{v_1}{c_1^2} \cdot \frac{\nabla \rho_1}{\rho_1} - \frac{v_2}{c_2^2} \cdot \frac{\nabla \rho_2}{\rho_2} - \nabla \cdot \bigg(\frac{v_1}{c_1^2} - \frac{v_2}{c_2^2}\bigg) -2 \bigg(\frac{\alpha_2(\omega)}{c_2}-\frac{\alpha_1(\omega)}{c_1}\bigg)  = 0.
\end{equation}

Proposition \ref{boundaryresult} gives 
\begin{equation}
\label{eq_100_1}
\frac{\nabla \rho_1}{\rho_1}\bigg|_{\partial \Omega} = \frac{\nabla \rho_2}{\rho_2}\bigg|_{\partial \Omega}\quad \text{in}\quad H^{\frac{1}{2}}(\p \Omega),
\end{equation}
and 
\begin{equation}
\label{eq_100_2}
\frac{v_1}{c_1^2}\bigg|_{\partial \Omega} = \frac{v_2}{c_2^2}\bigg|_{\partial \Omega}\quad \text{in}\quad H^{\frac{1}{2}}(\p \Omega).
\end{equation}
Letting $u_j = \frac{1}{2} \log \rho_j\in (W^{1,\infty}\cap H^2)(\Omega; \R)$, and   using  \eqref{eq_100_1} and the connectedness of  $\partial \Omega$, we see that $g=u_1-u_2$ is a constant along $\p \Omega$. Furthermore, letting $X = \nabla u_1 + \nabla u_2\in   (L^{\infty}\cap H^1)(\Omega; \R^n)$, and using \eqref{eq 6.6}, we get 
\[
\Delta g - X\cdot \nabla g = 0 \quad \text{in} \quad  \Omega.
\]
An application of the  maximum principle  gives that $g$ is a constant in $\Omega$, see \cite[Chapter 3, Section 8.2]{Aubin_book}. Hence,  $\rho_1 = C\rho_2$ in $\Omega$. 

\textbf{Conclusion of the proof of Corollary \ref{twofrequency}.} Let $\omega=\omega_1$. Taking the real part of $A_1(\omega)-A_2(\omega)=\nabla \varphi(\omega)$, we see that 
\begin{equation}
\label{eq 6.12}
 \frac{v_1}{c_1^2} -  \frac{v_2}{c_2^2} = \nabla \chi, \quad  \chi=\frac{\Re \varphi(\omega)}{\omega}.
\end{equation} 
Setting  $a = \frac{\nabla \rho_1}{\rho_1} = \frac{\nabla \rho_2}{\rho_2}\in   (L^{\infty}\cap H^1)(\Omega; \R^n)$, and recalling that $\alpha_1(\omega)=\alpha_2(\omega)=0$,  we obtain from \eqref{eq 6.7} and \eqref{eq_100_2} that 
\begin{align*}
&a\cdot \nabla \chi-\Delta\chi= 0 \quad \text{in}\quad \Omega,\\
&\chi=B  \quad \text{on}\quad \p \Omega,
\end{align*}
where $B$ is a constant.  Another application of the maximum principle gives that $\chi=B$ in  $\Omega$, and therefore, $\frac{v_1}{c_1^2} -  \frac{v_2}{c_2^2} = 0$ in $\Omega$. Now \eqref{eq 6.5} implies that $c_1 = c_2$ in $\Omega$, and thus,  $v_1 = v_2$ in $\Omega$. This completes the proof of Corollary \ref{twofrequency}.

\textbf{Proof of Corollary  \ref{threefrequency}.}  It follows from \eqref{eq 6.7} that 
\begin{equation}
\label{eq_100_4}
\bigg(\frac{v_1}{c_1^2}  - \frac{v_2}{c_2^2} \bigg)\cdot a - \nabla \cdot \bigg(\frac{v_1}{c_1^2} - \frac{v_2}{c_2^2}\bigg) +2 \omega^{\zeta_1} 
\frac{\alpha_{0,1}}{c_2}- 2 \omega^{\zeta_2}\frac{\alpha_{0,2}}{c_2}  = 0,
\end{equation}
for $\omega=\omega_1,\omega_2,\omega_3>0$ mutually different frequencies. If $\zeta_1(x)\ne \zeta_2(x)$ then the vectors $(1,\omega^{\zeta_1(x)}, \omega^{\zeta_2(x)})$, $\omega=\omega_1,\omega_2,\omega_3$, are linearly independent in $\R^3$.  Hence, \eqref{eq_100_4} implies that at the point $x$, we have
\begin{equation}
\label{eq_100_5}
\bigg(\frac{v_1}{c_1^2}  - \frac{v_2}{c_2^2} \bigg)\cdot a  - \nabla \cdot \bigg(\frac{v_1}{c_1^2} - \frac{v_2}{c_2^2}\bigg)  = 0, 
\end{equation}
$\frac{\alpha_{0,1}}{c_2} =0$, $ \frac{\alpha_{0,2}}{c_2}=0$.
If $\zeta_1(x)=\zeta_2(x)$ then the vectors $(1,\omega^{\zeta_1(x)})$, $\omega=\omega_1,\omega_2$, are linearly independent and at the point $x$, \eqref{eq_100_4} gives that \eqref{eq_100_5} holds and 
$\frac{\alpha_{0,1}}{c_2} - \frac{\alpha_{0,2}}{c_2}=0$.
As in the proof of Corollary \ref{twofrequency}, we get $c_1 = c_2$, and $v_1 = v_2$ in $\Omega$.  Furthermore, $\alpha_1(x)=\alpha_2(x)$. This completes the proof of Corollary  \ref{threefrequency}.

\appendix
\section{Boundary reconstruction of ($H^1 \cap L^\infty$)-vector field}
\label{boundary}
The purpose of this appendix is to provide a proof of the boundary reconstruction of the $(H^1 \cap L^\infty)$-vector field $A$ from the knowledge of the Dirichlet-to-Neumann map $\Lambda_{A,q}$ for the operator $L_{A,q}= -\Delta - 2iA\cdot \nabla  + q$. When doing so, we follow the arguments of \cite[Appendix A]{KKGU2017geo} closely, the only difference being that here the potential $q$ is present, whereas it was absent in \cite{KKGU2017geo}. We refer to  \cite{Brown2001}, \cite{Brownsalo}, \cite{KKGU2017geo}, and \cite{KKGU2017} for similar reconstruction arguments. One can also note that in contrast to \cite{KKGU2017}, here we are able to determine not only the tangential component of $A$ on $\partial \Omega$, but the entire trace of $A$ on $\partial \Omega$. Our result is as follows. 

\begin{prop}
\label{boundaryresult}
Let $\Omega \subseteq \R^n, n \geq 3$, be a bounded open set with $C^\infty$ boundary, and let $A_1, A_2 \in (H^1 \cap L^\infty) (\Omega; \C^n)$. Suppose that the assumption (A) holds for both operators $L_{A_1, q_1}$ and $L_{A_2, q_2}$, and that $\Lambda_{A_1, q_1} = \Lambda_{A_2, q_2}$. Then $A_1|_{\partial \Omega} = A_2|_{\partial \Omega}$ in $H^{1/2}(\partial \Omega; \C^n).$
\end{prop}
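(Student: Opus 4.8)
The plan is to follow the classical boundary determination strategy of Brown, adapted to the magnetic-type operator $L_{A,q}$ with $A$ only of class $H^1\cap L^\infty$, as carried out in \cite[Appendix A]{KKGU2017geo}. The starting point is the boundary integral identity: from $\Lambda_{A_1,q_1}=\Lambda_{A_2,q_2}$ and the definition \eqref{tracedefinition} of the Neumann trace, for solutions $u_j\in H^1(\Omega)$ of $L_{A_j,q_j}u_j=0$ with matching Dirichlet data $f\in H^{1/2}(\partial\Omega)$ one obtains
\[
\int_\Omega\bigl[-2i(A_1-A_2)\cdot\nabla u_1\,\overline{w}+(q_1-q_2)u_1\overline{w}\bigr]\,dx=0
\]
for a suitable test solution $w$, or more usefully the symmetric sesquilinear version in which one pairs against an arbitrary $w\in H^1(\Omega)$. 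The key point is that only the boundary values of $A_1-A_2$ enter after an appropriate limiting procedure, because the bulk terms carry extra decay.

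First I would fix a boundary point $x_0\in\partial\Omega$, flatten the boundary near $x_0$ by a local diffeomorphism, and introduce a family of highly oscillating, concentrating quasi-solutions (WKB-type boundary layer functions) $v_{\lambda}$ supported in a shrinking half-ball around $x_0$, of the form $v_\lambda(x)=\lambda^{(n-1)/4}\eta(x)e^{i\lambda(x\cdot\tau)}e^{-\lambda\,d(x)}$ with $\tau$ a unit tangent vector at $x_0$, $d$ the boundary distance, and $\eta$ a cutoff; these are approximate solutions of $-\Delta v=0$ with controlled errors. Then I would solve the exact equations $L_{A_j,q_j}u_j=0$ with $u_j=v_\lambda+\psi_j$, estimating the correction $\psi_j$ in $H^1(\Omega)$ via the solvability/Fredholm theory guaranteed by assumption (A); the corrections are lower order in $\lambda$. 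Substituting the pair $u_1$ (solving equation $1$) and $u_2$ (solving the adjoint equation $2$) — both built from the same $v_\lambda$ — into the boundary integral identity, one multiplies by an appropriate power of $\lambda$ and lets $\lambda\to\infty$. The leading contribution is
\[
c_n\,\tau\cdot\bigl(A_1(x_0)-A_2(x_0)\bigr)=0,
\]
for every unit tangent $\tau$, which gives vanishing of the tangential part of $A_1-A_2$ at $x_0$, hence (as $x_0$ is arbitrary) of the full tangential trace on $\partial\Omega$. To capture the normal component as well — this is the refinement over \cite{KKGU2017}, already announced in the excerpt — I would use a second choice of test functions, a gradient-type family whose leading boundary behavior pairs with the normal component $\nu\cdot(A_1-A_2)$; concretely one exploits the $\nabla u_1$ factor in the identity together with $v_\lambda$ of the gradient form $\nabla(\text{phase})\,e^{i\lambda\cdots}$, so that integration by parts transfers a normal derivative onto the boundary and isolates $\nu\cdot(A_1-A_2)(x_0)$. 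The passage from pointwise vanishing to vanishing in $H^{1/2}(\partial\Omega;\C^n)$ is then a routine trace/regularity argument, using $A_j|_{\partial\Omega}\in H^{1/2}(\partial\Omega)$.

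The main obstacle is the low regularity $A\in H^1\cap L^\infty$: unlike in the $W^{1,\infty}$ or smooth setting, one cannot freely integrate by parts to move derivatives off $A$, and the term $\int (A_1-A_2)\cdot\nabla u_1\,\overline{u_2}$ must be handled with $A_1-A_2$ kept as a multiplier and estimated in $L^\infty$ (or $L^2$) while the oscillating factors supply the needed powers of $\lambda$. One must check carefully that the remainder estimates for $\psi_j$ — which rely only on the $L^\infty$ bound on $A$ and the $L^\infty$ bound on $q$, not on any derivative of $A$ — are strong enough that the $\psi_j$-contributions vanish in the limit; this is exactly the computation done in \cite[Appendix A]{KKGU2017geo}, and the only new feature here is the harmless zeroth-order term $(q_1-q_2)u_1\overline{u_2}$, which scales strictly below the leading order and therefore does not affect the limit. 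Once these estimates are in place the argument closes as above.
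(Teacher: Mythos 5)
Your overall architecture matches the paper's proof: an integral identity coming from $\Lambda_{A_1,q_1}=\Lambda_{A_2,q_2}$, Brown--Salo type oscillating solutions concentrating at a boundary point $x_0$ (the paper pairs the exact solutions $u_j=v_0+w_j$ against a \emph{harmonic} function $v=v_0+v_1$ with the same Dirichlet data, which is what makes the principal parts cancel cleanly), remainder bounds for the correctors that use only the $L^\infty$ norms of $A_j$ and $q_j$, and the observation that the new term $(q_1-q_2)u_1\overline v$ is $\mathcal O(\lambda^{1/2})$ after the normalization $\lambda^{-(n-1)/2}$ and so drops out in the limit. All of this is in line with \cite[Appendix A]{KKGU2017geo} and with the paper's Appendix.

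The genuine gap is in how you recover the normal component. You assert that the leading-order limit of the identity is $c_n\,\tau\cdot\bigl(A_1(x_0)-A_2(x_0)\bigr)=0$ for unit tangents $\tau$, i.e.\ only the tangential trace, and then invoke a second, ``gradient-type'' family together with an integration by parts that ``transfers a normal derivative onto the boundary'' to isolate $\nu\cdot(A_1-A_2)(x_0)$. That step is not worked out, and it is precisely the kind of integration by parts onto $A_1-A_2$ that you yourself note is unavailable at $H^1\cap L^\infty$ regularity; as stated it would not close. In fact no second family is needed: the test functions you already wrote down, $v_\lambda=\eta\,e^{i\lambda x\cdot\tau}e^{-\lambda d(x)}$ (in the paper, $v_0=\eta(x/\lambda^{1/2})e^{\frac{i}{\lambda}(\tau'\cdot x'+ix_n)}$ in boundary normal coordinates), have full complex gradient proportional to $(\tau',i)$ because of the decaying factor in the normal variable, and since the identity pairs $A_j$ against $\nabla u_1$, the correct leading-order computation yields
\begin{equation*}
(\tau',i)\cdot A_1(x_0)=(\tau',i)\cdot A_2(x_0)\qquad\text{for all }\tau'\in\R^{n-1},
\end{equation*}
from which both the tangential part (vary $\tau'$) and the normal part (take $\tau'=0$, or subtract two choices of $\tau'$) follow at once. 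So the fix is not a new family of solutions but simply keeping the contribution of $e^{-\lambda d(x)}$ when you differentiate the phase; this is exactly the paper's route (reducing to \cite[formula (A.22)]{KKGU2017geo}) and is why the full trace, not just its tangential projection, is determined. Two smaller points: your cutoff should shrink with $\lambda$ (as $\eta(x/\lambda^{1/2})$) if you want a genuine pointwise (Lebesgue-point) value at $x_0$ rather than a local average, and your first displayed identity with $(A_1-A_2)\cdot\nabla u_1$ is not literally correct since $u_1$ and $u_2$ solve different equations; the usable identity keeps $A_1\cdot\nabla u_1$ and $A_2\cdot\nabla u_2$ on the two sides, paired against the common harmonic $v$.
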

\begin{proof}
Let $f\in H^{1/2}(\p \Omega)$.  First, arguing similarly to  \cite[Appendix A]{KKGU2017geo} and using that $\Lambda_{A_1, q_1} = \Lambda_{A_2, q_2}$, we obtain the following integral identity, 
\begin{equation}
\label{A6}
\int_\Omega [-2i(A_1 \cdot \nabla u_1)\overline{v} + q_1u_1\overline{v}]dx = \int_\Omega [-2i(A_2 \cdot \nabla u_2)\overline{v} + q_2u_2\overline{v}]dx, 
\end{equation}
valid for all $u_1, u_2 \in H^1(\Omega)$ solving 
\begin{equation}
\label{A1}
\begin{cases} 
(-\Delta -2iA_j \cdot \nabla+q_j)u_j = 0,\\
u_j|_{\p\Omega}=f
\end{cases}
\end{equation}
for $j = 1, 2$, and all $v \in H^1(\Omega)$ solving 
\begin{equation}
\label{A2}
\begin{cases} 
-\Delta v = 0,\\
v|_{\p\Omega}=f.
\end{cases}
\end{equation}

Similarly to \cite{KKGU2017geo}, we shall construct some special solutions to \eqref{A1} and \eqref{A2}, whose boundary values have an oscillatory behavior while becoming increasingly concentrated near a fixed boundary point $x_0 \in \p\Omega$, see also \cite{Brown2001} and \cite{Brownsalo}. To this end, it is convenient to straighten out the boundary locally by means of the boundary normal coordinates. 

Let $y = (y', y_n) \in \R^n, y' = (y_1, ..., y_{n-1})$ be the boundary normal coordinates centered at $x_0$. Thus, $y$ varies in a neighborhood of 0 in $\R^n$. In terms of $y$, locally near $x_0$, the boundary $\p\Omega$ is defined by $y_n = 0$, and $y_n >0$ if and only if $x \in \Omega$. In what follows, we shall write $x = (x', x_n)$ instead of $y = (y', y_n)$.

Let $\eta \in C_0^\infty (\R^n; \R)$ be a function such that supp($\eta)$ is in a small neighborhood of 0, and 
\begin{align*}
\int_{\R^{n-1}} \eta (x', 0)^2 dx' = 1.
\end{align*}

As in \cite{Brown2001}, \cite{Brownsalo}, \cite{KKGU2017geo}, \cite{KKGU2017}, we let 
\begin{align*}
v_0(x) = \eta\bigg(\frac{x}{\lambda^{1/2}}\bigg)e^{\frac{i}{\lambda}(\tau' \cdot x'+ix_n)}, 0 < \lambda \ll 1,
\end{align*}
where $\tau ' \in \R^{n-1}$ = $T_{x_0} \partial \Omega$. This implies $v_0 \in C^\infty_0(\R^n)$ and supp($v_0$) is in $\mathcal{O}(\lambda^{1/2})$ neighborhood of $x_0 =0$. 

Let $f = v_0|_{\p\Omega}$. Then $v = v_0 + v_1$ solves \eqref{A2} if $v_1 \in H_0^1(\Omega)$ is the unique solution to the Dirichlet problem 
\begin{equation}
\label{A7}
\begin{cases} 
-\Delta v_1 = \Delta v_0 \quad \text{in}\quad\Omega,\\
v_1|_{\p\Omega}=0.
\end{cases}
\end{equation}
We shall need the following estimates established in \cite{Brownsalo}, \cite{KKGU2017geo}, and \cite{KKGU2017}, 
\begin{equation}
\label{A10}
\|v_0\|_{L^2(\Omega)} \leq \mathcal{O}(\lambda^{\frac{n-1}{4}+\frac{1}{2}}),
\end{equation}
\begin{equation}
\label{A11}
\|v_1\|_{L^2(\Omega)} \leq \mathcal{O}(\lambda^{\frac{n-1}{4}+\frac{1}{2}}). 
\end{equation}

Turning the attention to the problem \eqref{A1},  we see that  $u_j = v_0+w_j$ solves \eqref{A1} if $w_j \in H^1_0(\Omega)$ is the unique solution to \begin{equation}
\label{A8}
\begin{cases} 
(-\Delta -2iA_j \cdot \nabla+q_j)w_j = -(-\Delta -2iA_j \cdot \nabla+q_j)v_0 \quad \text{in}\quad \Omega,\\
w_j|_{\p\Omega}=0.
\end{cases}
\end{equation} 
Using the Lax-Milgram lemma together with the uniqueness of solution to \eqref{A8}, we obtain that 
\begin{equation}
\label{A9}
\|w_j\|_{H^1(\Omega)} \leq C\|(-\Delta -2iA_j \cdot \nabla+q_j)v_0\|_{H^{-1}(\Omega)}. 
\end{equation} 
To bound the right hand side of \eqref{A9}, let us recall that the following estimate was established in \cite[Appendix]{KKGU2017geo},
\begin{equation}
\label{A14}
\|(-\Delta -2iA_j \cdot \nabla)v_0\|_{H^{-1}(\Omega)}\le \mathcal{O}(\lambda^{\frac{n-1}{4}}). 
\end{equation}
Using \eqref{A10}, we see that 
\begin{equation}
\label{A17}
\|q_j v_0\|_{H^{-1}(\Omega)} \leq \mathcal{O}(\lambda^{\frac{n-1}{4}+\frac{1}{2}}).
\end{equation}
It follows from \eqref{A9}, \eqref{A14}, and \eqref{A17} that 
\begin{equation}
\label{A18}
\|w_j\|_{H^1(\Omega)} \leq \mathcal{O}(\lambda^{\frac{n-1}{4}}). 
\end{equation}

Now let us plug the solutions $u_j=v_0+w_j$ and $v=v_0+v_1$ of \eqref{A1} and \eqref{A2}, respectively, into  \eqref{A6}, multiply it by $\lambda^{-\frac{(n-1)}{2}}$, and compute the limit as $\lambda \rightarrow 0$. To this end, using \eqref{A10}, \eqref{A11}, and \eqref{A18}, we first observe that 
\begin{equation}
\label{eq_A19}
\begin{aligned}
	&\bigg|\lambda^{-\frac{(n-1)}{2}}\int_\Omega q_ju_j\overline{v}dx\bigg| = \bigg|\lambda^{-\frac{(n-1)}{2}}\int_\Omega q_j(v_0+w_j)(\overline{v_0} + \overline{v_1})dx\bigg|\\ &\leq \lambda^{-\frac{(n-1)}{2}}\|q_j\|_{L^\infty(\Omega)}\big(\|v_0\|_{L^2(\Omega)}+\|w_j\|_{L^2(\Omega)}\big)\big(\|v_0\|_{L^2(\Omega)}+\|v_1\|_{L^2(\Omega)}\big)\\
	&\leq \mathcal{O}(\lambda^{1/2}) \rightarrow 0 \quad \text{as} \quad \lambda \rightarrow 0. 
\end{aligned}
\end{equation}
Using \eqref{eq_A19}, we conclude from \eqref{A6} that 
\[
\lim_{\lambda\to 0}\lambda^{-\frac{(n-1)}{2}}\int_{\Omega} (A_1\cdot \nabla u_1)\overline{v}dx=\lim_{\lambda\to 0}\lambda^{-\frac{(n-1)}{2}}\int_{\Omega} (A_2\cdot \nabla u_2)\overline{v}dx,
\]
which is exactly the same as \cite[formula (A.22)]{KKGU2017geo}. The arguments in \cite{KKGU2017geo} allow us therefore to conclude from \eqref{A6} that
\begin{align*}
	(\tau', i) \cdot A_1(0) = (\tau', i) \cdot A_2(0),
\end{align*}
for all $\tau' \in \R^{n-1}$. This completes the proof of Proposition \ref{boundaryresult}.
\end{proof}

\section*{Acknowledgements}  
The author would like to thank Katya Krupchyk for her support and guidance.  The research is partially supported by the National Science Foundation (DMS 1500703, DMS 1815922). The author is also very grateful to the referee for the helpful comments which led to improvements in the presentation of the paper.


\begin{thebibliography} {1}
		
\bibitem{Agal2015}
Agaltsov, A.D., \emph{A global uniqueness result for acoustic tomography of moving fluid}, Bull. Sci. Math \textbf{139} (2015) 937--942.	

\bibitem{AN_scatt}		
Agaltsov, A.,  Novikov, R., \emph{Riemann-Hilbert problem approach for two-dimensional flow inverse scattering}, J. Math. Phys. \textbf{55} (2014), no. 10, 103502, 25 pp.		
		
\bibitem{AARN2015}
Agaltsov, A.,  Novikov, R., \emph{Uniqueness and non-uniqueness in acoustic tomography of moving fluid},  
J. Inverse Ill-Posed Probl. \textbf{24} (2016), no. 3, 333--340. 

\bibitem{Aubin_book}
Aubin, T.,  \emph{Some nonlinear problems in Riemannian geometry},  Springer Monographs in Mathematics. Springer--Verlag, Berlin, 1998. 
		
\bibitem{BBS}
Baykov, S., Burov, V., Sergeev, S., \emph{Mode tomography of moving ocean}, Proceedings of the 3rd European conference on underwater acoustics, 1996, 845--850. 
		
\bibitem{Brezisbook}
Brezis, H., \emph{Functional analysis, Sobolev spaces and partial differential equations}, Universitext. Springer, New York, 2011. 		
\bibitem{Brown_1996} 
Brown, R.,  \emph{Global uniqueness in the impedance-imaging problem for less regular conductivities},  SIAM J. Math. Anal. \textbf{27} (1996), no. 4, 1049--1056.


\bibitem{Brown2001}
Brown, R., \emph{Recovering the conductivity at the boundary from the Dirichlet to Neumann map: a pointwise result}, J. Inverse Ill-Posed Probl. \textbf{9} (2001), no. 6, 567--574.
		
\bibitem{Brownsalo}
Brown, R., Salo, M., \emph{Identifiability at the boundary for first-order terms}, Appl. Anal.  \textbf{85} (2006), no. 6-7, 735--749.

\bibitem{Brown_Torres_2003}
Brown, R.,  Torres, R., \emph{Uniqueness in the inverse conductivity problem for conductivities with $3/2$ derivatives in $L^p$, $p>2n$},  J. Fourier Anal. Appl. \textbf{9} (2003), no. 6, 563--574. 

	

\bibitem{BSZR} 
Burov, V., Shurup, A., Zotov, D., Rumyantseva, O., \emph{Simulation of a functional solution to the acoustic tomography problem for data from quasi-point transducers}, Acoustical Physics, \textbf{59}(3), 2013, 345--360. 

\bibitem{Caro_Rogers}
Caro, P.,  Rogers, K.,  \emph{Global uniqueness for the Calder\'on problem with Lipschitz conductivities}, Forum Math. \textbf{Pi 4} (2016), e2, 28 pp. 

\bibitem{Cheng_Nakamura_Somersalo}
Cheng, J., Nakamura, G., Somersalo, E., \emph{Uniqueness of identifying the convection term},   
Commun. Korean Math. Soc. \textbf{16} (2001), no. 3, 405--413. 



		
\bibitem{Eskin_book}
Eskin, G.,  \emph{Lectures on linear partial differential equations}, Graduate Studies in Mathematics, \textbf{123}. American Mathematical Society, Providence, RI, 2011
		
\bibitem{EskinRal}
Eskin, G., Ralston, J.,  \emph{Inverse scattering problem for the Schr\"odinger equation with magnatic potential at a fixed energy}, Comm. Math. Phys. \textbf{173} (1995), no. 61, 199--224.
		
\bibitem{Haberman}
Haberman, B., \emph{Uniqueness in Calder\'on's problem for conductivities with unbounded gradient},  Comm. Math. Phys. \textbf{340} (2015), no. 2, 639--659. 

\bibitem{Haberman_magnetic}
Haberman, B., \emph{Unique determination of a magnetic Schr\"odinger operator with unbounded magnetic potential from boundary data},  Int. Math. Res. Not. IMRN \textbf{2018}, no. 4, 10801128. 


\bibitem{Hab_Tataru}
Haberman, B.,  Tataru, D., \emph{Uniqueness in Calder\'on's problem with Lipschitz conductivities}, Duke Math. J. \textbf{162} (2013), no. 3, 497--516.
		

\bibitem{KSU_2007}
Kenig, C., Sj\"ostrand, J., Uhlmann, G., \emph{The Calder\'on problem with partial data}, Ann. of Math (2) \textbf{165} (2007), no. 2, 567--591. 

\bibitem{Knudsen_Salo}
Knudsen, K.,  Salo, M., \emph{Determining nonsmooth first order terms from partial boundary measurements}, 
Inverse Probl. Imaging \textbf{1} (2007), no. 2, 349--369. 

		
\bibitem{KKGU2014}
Krupchyk, K., Uhlmann,G., \emph{Uniqueness in an inverse boundary problem for a magnetic Schr\"odinger operator with a bounded magnetic potential}, Commun. Math. Phys. \textbf{327(3)} (2014) 993--1009.
		
\bibitem{KKGU2016}
Krupchyk, K., Uhlmann,G., \emph{The Calder\'on problem with partial data for conductivities with 3/2 derivatives,}, Comm. Math. Phys., \textbf{348} (2016), no. 1, 185--219.
		
\bibitem{KKGU2017geo} 
Krupchyk, K., Uhlmann,G., \emph{Inverse problems for advection diffusion equations in admissible geometries}, Comm. Partial Differential Equations, to appear.
		
\bibitem{KKGU2017}
Krupchyk, K., Uhlmann,G., \emph{Inverse problems for magnetic Schr\"odinger operators in transversally anisotropic geometries}, Comm. Math. Phys., to appear.
		

\bibitem{NakSunUlm_1995}
Nakamura, G.,  Sun, Z., and  Uhlmann, G., \emph{Global identifiability for an inverse problem for the Schr\"odinger equation in a magnetic field}, Math. Ann. \textbf{303} (1995), no. 3, 377--388.

\bibitem{Novikov_1988}
Novikov, R., \emph{A multidimensional inverse spectral problem for the equation $-\Delta \psi +(v(x)-Eu(x))\psi =0$},  (Russian) Funktsional. Anal. i Prilozhen. \textbf{22} (1988), no. 4, 11--22, 96; translation in Funct. Anal. Appl. \textbf{22} (1988), no. 4, 263--272.

\bibitem{Panchenko_2002}
Panchenko, A., \emph{An inverse problem for the magnetic Schr\"odinger equation and quasi-exponential solutions of nonsmooth partial differential equations}, Inverse Problems \textbf{18} (2002), no. 5, 1421--1434. 

\bibitem{Pohjola}
Pohjola, V.,  \emph{A uniqueness result for an inverse problem of the steady state convection--diffusion equation}, SIAM J. Math. Anal. \textbf{47} (2015), no. 3, 2084--2103. 

\bibitem{Paiv_Pan_Uhl}
P\"aiv\"arinta, L.,  Panchenko, A., and Uhlmann, G., \emph{Complex geometrical optics solutions for Lipschitz conductivities}, Rev. Mat. Iberoamericana \textbf{19} (2003), no. 1, 57--72. 

\bibitem{DRKW}
Roussef, D., Winters, K., \emph{Two-dimensional vector flow inversion by diffraction tomography}, Inverse Probl., \textbf{10} (1994) 687--697.
		
\bibitem{RBKS}
Rumyantseva,  O.,  Burov, V.,  Konyushkin, A.,  and  Sharapov, N., \emph{Increased resolution of two-dimensional tomography imaging along the transverse coordinate and separate reconstruction of elastic and viscous scatterer characteristics}, Acoust. Phys. \textbf{55} (2009), no. 4--5, 613--629.


\bibitem{RumSh}
Rumyantseva, O.,  Shurup, A.,  \emph{Equation for wave processes in inhomogeneous moving media and functional solution of the acoustic tomography problem based on it}, Acoustical Physics \textbf{63} (1) (2017),    95--103.
		
\bibitem{MRHE}
Rychagov, M.,  Ermert, H., \emph{Reconstruction of fluid motion in acoustic diffraction tomography}, J. Acoust. Soc., \textbf{99(5)} (1996), 3029-3035.
		
  

		
				
\bibitem{Salo_diss}
Salo, M.,  \emph{Inverse problems for nonsmooth first order perturbations of the Laplacian},  Ann. Acad. Sci. Fenn. Math. Diss.  \textbf{139}  (2004).
		

		 
\bibitem{Salo_Tzou_2009}
Salo, M., Tzou, L., \emph{Carleman estimates and inverse problems for Dirac operators},  Math. Ann.  \textbf{344}  (2009),  no. 1, 161--184.
		
\bibitem{Sun_1993}
Sun, Z.,  \emph{An inverse boundary value problem for Schr\"odinger operators with vector potentials}, Trans. Amer. Math. Soc.  \textbf{338}  (1993),  no. 2, 953--969.
		
\bibitem{Syl_Uhl_1987}
Sylvester, J.,  Uhlmann, G.,  \emph{A global uniqueness theorem for an inverse boundary value problem},  Ann. of Math. (2) \textbf{125} (1987), no. 1, 153--169.

\bibitem{Tolmasky_1998} 
Tolmasky, C., \emph{Exponentially growing solutions for nonsmooth first-order perturbations of the Laplacian}, SIAM J. Math. Anal. \textbf{29} (1998), no. 1, 116--133.
		
\end{thebibliography}
\end{document}